\newcommand{\abssec}[1]{\noindent\normalsize {\bfseries #1\quad }\ignorespaces}
\renewenvironment{abstract}{\abssec{Abstract}}{\par\vspace{.1in}}
\newenvironment{keywords}{\abssec{Key Words}}{\par\vspace{.1in}}
\newenvironment{AMSMOS}{\abssec{AMS subject
  classification}}{\par\vspace{.1in}}
\theoremstyle{plain}
\newtheorem{theorem}{Theorem}[section]
\newtheorem{corollary}[theorem]{Corollary}
\newtheorem{lemma}[theorem]{Lemma}
\theoremstyle{definition}
\numberwithin{equation}{section}
\def\R{\mathbb{R}}
\def\S{\mbox{$\xi(r)\,r^\lambda\sin(\lambda\theta)$}}
\def\Sd{p_s}
\def\Fd{\phi_s}
\newcommand{\V}{H^1_\Delta(\Omega)\cap H^1_0(\Omega)}
\newcommand{\Span}{\text{Span}}
\definecolor{darkgreen}{rgb}{0.0, 0.5, 0.3}
\begin{document}

\title{\LARGE Adapted numerical methods
  \\ for the numerical solution of the Poisson equation \\ with $L^2$
  boundary data in non-convex domains\thanks{This paper is an extension of our previous paper \cite{ApelNicaisePfefferer2014b}. The work was partially
    supported by Deutsche Forschungsgemeinschaft, IGDK 1754.}}

\author{Thomas Apel\thanks{\texttt{thomas.apel@unibw.de}, Institut
    f\"ur Mathematik und Bauinformatik, Universit\"at der Bundeswehr
    M\"unchen, D-85579 Neubiberg, Germany} \and Serge
  Nicaise\thanks{\texttt{snicaise@univ-valenciennes.fr}, LAMAV,
    Institut des Sciences et Techniques de Valenciennes, Universit\'e
    de Valenciennes et du Hainaut Cambr\'esis, B.P. 311, 59313
    Valenciennes Cedex, France } \and Johannes
  Pfefferer\thanks{\texttt{pfefferer@ma.tum.de}, Lehrstuhl f\"ur
      Optimalsteuerung, Technische Universit\"at M\"unchen,
      Boltzmannstr. 3, D-85748 Garching bei M\"unchen, Germany}}
\maketitle

\begin{abstract}
  The very weak solution of the Poisson equation with $L^2$ boundary
  data is defined by the method of transposition. The finite element
  solution with regularized boundary data converges in the
  $L^2(\Omega)$-norm with order $1/2$ in convex domains but has a
  reduced convergence order in non-convex domains although the
  solution remains to be contained in $H^{1/2}(\Omega)$. The reason is
  a singularity in the dual problem.  In this paper we propose and
  analyze, as a remedy, both a standard finite element method with
  mesh grading and a dual variant of the singular complement method.
  The error order 1/2 is retained in both cases also with non-convex
  domains.  Numerical experiments confirm the theoretical results.
\end{abstract}

\begin{keywords}
  Elliptic boundary value problem, very weak formulation, finite
  element method, mesh grading, singular complement method, discretization error
  estimate
\end{keywords}

\begin{AMSMOS}
  65N30; 65N15
\end{AMSMOS}

\section{Introduction}

In this paper we consider the  boundary value problem
\begin{align} \label{eq:bvp}
  -\Delta y &= f \quad\text{in }\Omega, & 
  y &= u \quad\text{on }\Gamma=\partial\Omega,
\end{align}
with right hand side $f\in H^{-1}(\Omega)$ and boundary data $u\in L^2(\Gamma)$. We assume $\Omega\subset\R^2$
to be a bounded polygonal domain with boundary $\Gamma$.  Such
problems arise in optimal control when the Dirichlet boundary control
is considered in $L^2(\Gamma)$ only, see for example
\cite{DeckelnickGuentherHinze2009,FrenchKing1991,MayRannacherVexler2008}.

For boundary data $u\in L^2(\Gamma)$ we cannot expect a weak solution
$y\in H^1(\Omega)$. Therefore we define a very weak solution by the
method of transposition which goes back at least to Lions and Magenes
\cite{LionsMagenes1968}: Find
\begin{align}\label{eq:veryweak2}
  y\in L^2(\Omega):\quad (y, \Delta v)_\Omega =
  (u,\partial_n v)_\Gamma - (f,v)_\Omega \quad\forall
  v\in V
\end{align}
with $(w,v)_G:=\int_G wv$ denoting the $L^2(G)$ scalar product
  or an appropriate duality product.
In our previous paper \cite{ApelNicaisePfefferer2014a} we  showed
that the appropriate space $V$ for the test functions is
\begin{align}\label{eq:H1Delta}
  V:=H^1_\Delta(\Omega)\cap H^1_0(\Omega) \quad\text{with}\quad
  H^1_\Delta(\Omega):=\{v\in H^1(\Omega): \Delta v\in L^2(\Omega)\}.
\end{align}
In particular it ensures $\partial_n v\in L^2(\Gamma)$ for $v\in V$
such that the formulation \eqref{eq:veryweak2} is well defined. We
proved the existence of a unique solution $y\in L^2(\Omega)$ for $u\in
L^2(\Gamma)$ and $f\in H^{-1}(\Omega)$, and that the solution is even in
$H^{1/2}(\Omega)$. The method of transposition is used in different
variants also in \cite{FrenchKing1991,Berggren2004,CasasRaymond2006,%
  CasasMateosRaymond2009,DeckelnickGuentherHinze2009,MayRannacherVexler2008}.

Consider now the discretization of the boundary value problem.  Let
$\mathcal{T}_h$ be a family of quasi-uniform, conforming finite
element meshes, and introduce the finite element spaces
\begin{align*}
  Y_h := \{v_h\in H^1(\Omega): v_h|_T\in\mathcal{P}_1\ \forall
  T\in\mathcal{T}_h\}, \quad Y_{0h} := Y_h\cap H^1_0(\Omega),\quad
  Y_h^\partial := Y_h|_{\partial\Omega}.
\end{align*}
Since the boundary datum $u$ is in general not contained in
$Y_h^\partial$ we have to approximate it by $u^h\in
  Y_h^\partial$, e.\,g. by using $L^2(\Gamma)$-projection or
quasi-interpolation. In this way,
the boundary datum is even regularized since $u^h\in
H^{1/2}(\Gamma)$. Hence we can consider a regularized (weak) solution
 in $Y_*^h:=\{v\in H^1(\Omega): v|_\Gamma=u^h\}$,  
\begin{align}\label{eq:regsol}
  y^h\in Y_*^h: \quad (\nabla y^h,\nabla v)_\Omega =
  (f,v)_\Omega \quad\forall v\in H^1_0(\Omega).
\end{align}
The finite element solution $y_h$ is now searched in $Y_{*h}:=
Y_*^h\cap Y_h$ and is defined in the classical way: find
\begin{align}\label{eq:serge20/06:6}
  y_h\in Y_{*h}:\quad (\nabla y_h,\nabla v_h)_\Omega =
  (f,v_h)_\Omega\quad\forall v_h\in Y_{0h}.
\end{align}
The same discretization was derived previously by Berggren
\cite{Berggren2004} from a different point of view.
In \cite{ApelNicaisePfefferer2014a} we showed that the discretization
error estimate
\begin{align*}
  \|y-y_h\|_{L^2(\Omega)}\le Ch^s
  \left(h^{1/2}\|f\|_{H^{-1}(\Omega)}+\|u\|_{L^2(\Gamma)}\right)
\end{align*}
holds for $s=1/2$ if the domain is convex; this is a slight
improvement of the result of Berggren, and the convex case is completely treated.
  In the case of non-convex domains this convergence order is
  reduced although the very weak solution $y$ is also in
$H^{1/2}(\Omega)$; the finite element method does not lead to the best
approximation in $L^2(\Omega)$. In order to describe the result we
assume for simplicity that $\Omega$ has only one corner with interior
angle $\omega\in(\pi,2\pi)$.
We proved in \cite{ApelNicaisePfefferer2014a} the convergence order
$s=\lambda-1/2-\varepsilon$, where $\lambda:=\pi/\omega$ and
  $\varepsilon>0$ arbitrarily small, and showed by numerical
experiments that the order of almost $\lambda-1/2$ is sharp.  Note
  that $s\to0$ for $\omega\to2\pi$. This is the state of the art for
  this kind of problem, and our aim is to devise methods to retain the
  convergence order $s=1/2$ in the non-convex case.

In order to explain the reduction in the convergence order and our
first remedy, let us first mention that we have to modify the
Aubin-Nitsche method to derive $L^2(\Omega)$-error estimates. The
first reason is that our problem has no weak solution, only the dual
problem,
\begin{align}\label{eq:defvz}
  v_z\in V:\quad (\varphi, \Delta v_z)_\Omega = (z,\varphi)_\Omega \quad\forall
  \varphi\in L^2(\Omega)
\end{align}
has. The second reason is that the solution $y$ has inhomogeneous Dirichlet data
such that an estimate of the $L^2(\Gamma)$-interpolation error of
$\partial_nv_z$ is needed. The $H^1(\Omega)$-error of a standard
finite element method is of order one in convex domains but reduces to
$s=\lambda-\varepsilon$ in the case of non-convex domains; moreover,
the order of the $L^2(\Gamma)$-interpolation error of $\partial_nv_z$
reduces from $1/2$ to $\lambda-1/2-\varepsilon$. It is known for a
long time that locally refined (graded) meshes and augmenting of the
finite element space by singular functions are appropriate to retain
the optimal convergence order for such problems, see, e.~g.,
\cite{Babuska1970,BlumDobrowolski1982,ciarletjr:03,%
  OganesjanRuhovec1979,Raugel1978,StrangFix1973}.  We use these
strategies in this paper. 

The novelty is that the adapted methods act now implicitly and
  occur essentially in the analysis for the dual problem. This sounds
  particularly simple in the case of mesh grading. However, the
  convergence proof in \cite{ApelNicaisePfefferer2014a} contains not
  only interpolation error estimates for the dual solution and its
  normal derivative (which are improved now) but also the application
  of an inverse inequality which gives a too pessimistic result if
  used unchanged in the case of graded meshes. We prove in Section
  \ref{sec:graded} a sharp result by using a weighted norm in
  intermediate steps. Note we suggest a strong mesh grading with
  grading parameter $\mu\to0$ (the parameter is explained in Section
  \ref{sec:graded}) for $\omega\to2\pi$ because of the interpolation
  error estimate of $\partial_nv_z$; the numerical tests show that
  weaker grading is not sufficient.

The basic idea of the dual singular function method, see
  \cite{BlumDobrowolski1982}, or the singular complement method, see 
  \cite{ciarletjr:03}, is to augment the approximation space for the
  solution by one (or more, if necessary) singular function of type
  $r^{\lambda}\sin(\lambda\theta)$ and the space of test functions by
  a dual function of type $r^{-\lambda}\sin(\lambda\theta)$, where
  $r,\theta$ are polar coordinates at the concave corner. In this
  paper we do it the other way round and compute an approximate
  solution
\[ z_h\in Y_h\oplus\Span\{r^{-\lambda}\sin(\lambda\theta)\},\]
such that the error estimate
\begin{align*}
  \|y-z_h\|_{L^2(\Omega)}\le Ch^{1/2}
  \left(h^{1/2}\|f\|_{H^{-1}(\Omega)}+\|u\|_{L^2(\Gamma)}\right)
\end{align*}
can be shown. Note that the original singular complement method
  augments the standard finite element space with a function which is
  part of the representation of the solution. Here, we complement the
  finite element space with $r^{-\lambda}\sin(\lambda\theta)\not\in
  H^{1/2}(\Omega)$, and although $y\in H^{1/2}(\Omega)$ this has an
  effect on the approximation order in the $L^2(\Omega)$-norm. This
  makes the method different from the original singular complement
  method, \cite{ciarletjr:03}, and we call it
  \emph{dual singular complement method}. Numerical experiments in
Section \ref{sec:experiments} confirm the theoretical results.

Finally in this introduction, we would like to note that higher order
finite elements are not useful here since the solution has low
regularity. The extension of our methods to three-dimensional domains
should be possible in the case of mesh grading (at considerable
technical expenses in the analysis) but is not straightforward in the
case of the dual singular complement method since the space
$V\setminus H^2(\Omega)$ is in general not finite dimensional, see
\cite{CiarletJungKaddouriLabrunieZou2005,CiarletJungKaddouriLabrunieZou2006}
for the Fourier singular complement method to treat special domains.
Curved boundaries could be treated at the prize of using non-affine
finite elements, see, e.~g.,
\cite{Bernardi1989,BrambleKing1994,DeckelnickGuentherHinze2009}.

\section{\label{sec:graded}Graded meshes}

Recall from the introduction that $\Omega\subset\R^2$ is a bounded
polygonal domain with boundary $\Gamma$, and we consider here the case
that $\Omega$ has exactly one corner (called \emph{singular corner})
with interior angle $\omega\in(\pi,2\pi)$. The convex case was already
treated in \cite{ApelNicaisePfefferer2014a} and the case of more than
one non-convex corners can be treated similarly since corner
singularities are local phenomena.

Without loss of generality we can assume that the singular
corner is located at the origin of the coordinate system, and that one
boundary edge is contained in the positive $x_1$-axis. We recall from
 \cite{grisvard:85a,grisvard:92b} that the weak solution of the boundary value problem 
\begin{align}\label{eq:thomas+}
  -\Delta v &= g \quad\text{in }\Omega, & 
  v &= 0 \quad\text{on }\Gamma=\partial\Omega,
\end{align}
with $g\in L^2(\Omega)$ is not contained in $H^2(\Omega)$ but in 
\begin{equation}\label{splittingreg/sing}
  \V=\left(H^2(\Omega)\cap H^1_0(\Omega)\right)\oplus \Span \{\S\},
\end{equation}
$\xi$ being a cut-off function, while $r$ and $\theta$ denote  polar coordinates at the singular corner.

Let the finite element mesh $\mathcal{T}_h=\{T\}$ be graded with
the mesh grading parameter $\mu\in(0,1]$, i.\,e., the element size
  $h_T=\mathrm{diam}\,T$ and the distance $r_T$ of the element $T$ to
  the singular corner are related by
\begin{align} \label{eq:gradingcondition}
  \begin{split}
   c_1h^{1/\mu} &\leq h_T \leq c_2h^{1/\mu} \qquad \text{ for } r_T=0, \\
   c_1h r_T^{1-\mu} &\leq h_T \leq c_2h r_T^{1-\mu} \ \quad \text{ for } r_T>0.
  \end{split}
\end{align}
Define the finite element spaces 
\begin{align}\label{eq:discretespaces}
  Y_h = \{v_h\in H^1(\Omega): v_h|_T\in\mathcal{P}_1\ \forall
  T\in\mathcal{T}_h\}, \quad Y_{0h} = Y_h\cap H^1_0(\Omega),\quad
  Y_h^\partial = Y_h|_{\partial\Omega},
\end{align}
and let the regularized boundary datum $u^h\in Y_h^\partial\subset
H^{1/2}(\Gamma)$ be defined by the $L^2(\Gamma)$-projection $\Pi_hu$
 or by the Carstensen interpolant $C_hu$, see
\cite{Carstensen1999}. To define the latter let ${\mathcal
  N}_{\Gamma}$ be the set of nodes of the triangulation on the
boundary, and set
\[
  C_hu=\sum_{x\in {\mathcal N}_{\Gamma}} \pi_x(u)\lambda_x \quad\text{with}\quad
  \pi_x(u)=\frac{\int_{\Gamma} u\lambda_x} {\int_{\Gamma} \lambda_x}
=\frac{(u,\lambda_x)_\Gamma}{(1,\lambda_x)_\Gamma},
\]
where $\lambda_x$ is the standard hat function related to $x$.
As already outlined in \cite{ApelNicaisePfefferer2014a}, the
advantages of the interpolant in comparison with the $L^2$-projection
are its local definition and the property
\[ 
  u\in[a,b]\quad\Rightarrow\quad C_hu\in[a,b],
\]
see \cite{ReyesMeyerVexler2008}; a disadvantage may be that
$C_hu_h\not=u_h$ for piecewise linear $u_h$. With these regularized
boundary data we define the regularized weak solution $y^h\in
Y_*^h:=\{v\in H^1(\Omega): v|_\Gamma=u^h\}$ by \eqref{eq:regsol}.

\begin{lemma}\label{lem:regerror}
  The effect of the regularization of the boundary datum can be estimated by
  \[
    \|y-y^h\|_{L^2(\Omega)}\le ch^{1/2}
    \left(\|u\|_{L^2(\Omega)} + h^{1/2}\|f\|_{H^{-1}(\Omega)}\right)
  \]
  if the mesh is graded with parameter $\mu<2\lambda-1$.
\end{lemma}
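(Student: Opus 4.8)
The plan is to run a duality (Aubin--Nitsche-type) argument against the dual problem \eqref{eq:defvz} and to reduce the whole estimate to the $L^2(\Gamma)$-approximation of $\partial_n v_z$ on the graded mesh. First I would observe that the error $w:=y-y^h$ solves a \emph{homogeneous} very weak problem: $y$ satisfies \eqref{eq:veryweak2} with data $(f,u)$, while the regularized weak solution $y^h\in H^1(\Omega)$ from \eqref{eq:regsol} is, by Green's formula on $V$ (legitimate since $\partial_n v\in L^2(\Gamma)$ for $v\in V$), the very weak solution with data $(f,u^h)$. Subtracting, the $f$-terms cancel and
\begin{align*}
  (w,\Delta v)_\Omega=(u-u^h,\partial_n v)_\Gamma\qquad\forall v\in V .
\end{align*}
Testing this with $v=v_z$ from \eqref{eq:defvz} and using $(w,\Delta v_z)_\Omega=(z,w)_\Omega$ gives $(z,w)_\Omega=(u-u^h,\partial_n v_z)_\Gamma$, so that $\|w\|_{L^2(\Omega)}\le\sup_{\|z\|_{L^2(\Omega)}=1}(u-u^h,\partial_n v_z)_\Gamma$. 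The bound is thus driven entirely by the boundary mismatch $u-u^h$; the $h^{1/2}\|f\|_{H^{-1}(\Omega)}$-summand is a (true) over-estimate that I retain for consistency with the later combined estimate.

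Next I would treat the boundary term. For the $L^2(\Gamma)$-projection $u^h=\Pi_hu$ the orthogonality $(u-\Pi_hu,\chi_h)_\Gamma=0$ for all $\chi_h\in Y_h^\partial$ lets me insert any $\chi_h$, so with the contraction $\|u-\Pi_hu\|_{L^2(\Gamma)}\le\|u\|_{L^2(\Gamma)}$,
\begin{align*}
  (u-u^h,\partial_n v_z)_\Gamma=(u-u^h,\partial_n v_z-\chi_h)_\Gamma\le\|u\|_{L^2(\Gamma)}\inf_{\chi_h\in Y_h^\partial}\|\partial_n v_z-\chi_h\|_{L^2(\Gamma)};
\end{align*}
for the Carstensen interpolant the same bound follows from its local $L^2(\Gamma)$-stability (passing to its adjoint quasi-interpolant). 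Everything now hinges on approximating $\partial_n v_z$. By \eqref{splittingreg/sing}, $v_z=v_{z,R}+\gamma\,\S$ with $v_{z,R}\in\Vreg$ and $\|v_{z,R}\|_{H^2(\Omega)}+|\gamma|\le c\|z\|_{L^2(\Omega)}$. The regular part satisfies $\partial_n v_{z,R}\in H^{1/2}(\Gamma)$ with $L^2(\Gamma)$-approximation error $\le ch^{1/2}\|z\|_{L^2(\Omega)}$ on any mesh, whereas the singular part behaves like $r^{\lambda-1}$ along both edges meeting at the corner (which lies in $L^2(\Gamma)$ precisely because $\lambda>1/2$ for $\omega<2\pi$).

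The main step, and the one I expect to be the real obstacle, is the graded-mesh estimate of $\inf_{\chi_h}\|r^{\lambda-1}-\chi_h\|_{L^2(\Gamma)}$, since $r^{\lambda-1}$ is \emph{more} singular than the primal singularity $r^\lambda$. On an interior edge element $I$ at distance $r_I$ with $h_I\sim h\,r_I^{1-\mu}$ the local estimate $\|r^{\lambda-1}-\chi_h\|_{L^2(I)}\lesssim h_I^{2}\|(r^{\lambda-1})''\|_{L^2(I)}\sim h^{5/2}r_I^{\lambda-1/2-5\mu/2}$ squares and sums to a quantity below $ch$ for $\mu<2\lambda-1$. The binding contribution comes from the corner element $I_1=[0,r_1]$ with $r_1\sim h^{1/\mu}$, where $r^{\lambda-1}$ cannot be interpolated and one only has
\begin{align*}
  \inf_{\chi_h}\|r^{\lambda-1}-\chi_h\|_{L^2(I_1)}\lesssim\|r^{\lambda-1}\|_{L^2(I_1)}\sim r_1^{\lambda-1/2}\sim h^{(\lambda-1/2)/\mu}.
\end{align*}
Demanding this be $\le ch^{1/2}$ forces $(\lambda-1/2)/\mu\ge1/2$, i.e.\ exactly $\mu\le2\lambda-1$, the strict inequality $\mu<2\lambda-1$ securing the clean order $h^{1/2}$ and absorbing the logarithmic borderline. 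Collecting the regular and singular parts yields $\sup_{\|z\|=1}\inf_{\chi_h}\|\partial_n v_z-\chi_h\|_{L^2(\Gamma)}\le ch^{1/2}\|z\|_{L^2(\Omega)}$, and hence the claimed estimate. The delicate point throughout is the sharp near-corner bookkeeping for $r^{\lambda-1}$, which also explains why the required grading degenerates ($\mu\to0$) as $\omega\to2\pi$, $\lambda\to1/2$.
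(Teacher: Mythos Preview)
Your proposal is correct and follows essentially the same route as the paper: the duality identity $(y-y^h,z)_\Omega=(u-u^h,\partial_n v_z)_\Gamma$, the splitting of $\partial_n v_z$ into an edgewise $H^{1/2}$-regular part and the singular trace $\xi(r)\,r^{\lambda-1}$, and the graded-mesh approximation of the latter, where both the corner element and the sum over interior elements yield exactly the condition $\mu<2\lambda-1$. The only cosmetic differences are that the paper uses a first-order local estimate $h_x\|r^{\lambda-2}\|_{L^2(\omega_x)}$ (rather than your second-order one) on interior edges and treats the Carstensen case explicitly via the hat-function expansion rather than invoking an adjoint quasi-interpolant.
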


\begin{proof}
  In view of 
  \[
    \|y-y^h\|_{L^2(\Omega)} = \sup_{z\in L^2(\Omega), z\not=0}
    \frac{(y-y^h,z)_\Omega}{\|z\|_{L^2(\Omega)}} 
  \]
  we have to estimate $(y-y^h,z)_\Omega$. To this end,
  let $z\in L^2(\Omega)$ be an arbitrary function, let $v_z\in V$ be defined by
  \begin{align}\label{eq:defvznew}
     (\varphi, \Delta v_z)_\Omega = (z,\varphi)_\Omega \quad\forall
    \varphi\in L^2(\Omega),
  \end{align}
  see also \eqref{eq:defvz}.  Since the weak regularized solution
  $y^h\in Y_*^h:=\{v\in H^1(\Omega): v|_\Gamma=u^h\}$ defined by
  \eqref{eq:regsol} is also a very weak solution,
  \begin{align}\label{eq:yh_veryweak2}
    (y^h, \Delta v)_\Omega = (u^h,\partial_n v)_\Gamma - 
    (f,v)_\Omega \quad\forall v\in V
  \end{align}
  we get with \eqref{eq:veryweak2} and \eqref{eq:defvznew}
  \begin{align*}
    (y-y^h, z)_\Omega = (u-u^h,\partial_n v_z)_\Gamma .
  \end{align*}

  If $u^h$ is the $L^2(\Gamma)$-projection $\Pi_hu$ of $u$ we can continue with
  \begin{align*}
    (u-u^h,\partial_n v_z)_\Gamma &= 
    (u-u^h,\partial_n v_z-\Pi_h(\partial_n v_z))_\Gamma =
    (u,\partial_n v_z-\Pi_h(\partial_n v_z))_\Gamma \\ &\le
    \|u\|_{L^2(\Gamma)}\,\|\partial_n v_z-\Pi_h(\partial_n v_z)\|_{L^2(\Gamma)}  \\& \le
    \|u\|_{L^2(\Gamma)}\,\|\partial_n v_z-C_h(\partial_n v_z)\|_{L^2(\Gamma)} \\& =
    \|u\|_{L^2(\Gamma)}\,\Big\|\sum_{x\in{\mathcal N}_{\Gamma}} 
    \left( \partial_n v_z-\pi_x(\partial_n v_z) \right)\lambda_x\Big\|_{L^2(\Gamma)}\\ &\le c
    \|u\|_{L^2(\Gamma)} \Big(\sum_{x\in{\mathcal N}_{\Gamma}} 
    \|\partial_n v_z-\pi_x(\partial_n v_z)\|_{L^2(\omega_x)}^2\Big)^{1/2}.
  \end{align*}

  If  $u^h$ is the Carstensen interpolant of $u$, there holds
  \begin{align*} 
    (u-C_hu, \partial_n v_z)_\Gamma &=
    \Big(\sum_{x\in{\mathcal N}_{\Gamma}}(u-\pi_xu)\lambda_x
    , \partial_n v_z\Big)_\Gamma =
    \sum_{x\in{\mathcal N}_{\Gamma}} 
    (u-\pi_x(u), (\partial_n v_z)\lambda_x)_\Gamma \\ \nonumber &=
    \sum_{x\in{\mathcal N}_{\Gamma}} 
    (u-\pi_x(u), (\partial_n v_z-\pi_x(\partial_n v_z))\lambda_x)_\Gamma \\  &\le
    \sum_{x\in{\mathcal N}_{\Gamma}} \|u\|_{L^2(\omega_x)} 
    \|\partial_n v_z-\pi_x(\partial_n v_z)\|_{L^2(\omega_x)} \\  &\le
    c \|u\|_{L^2(\Gamma)} \Big( \sum_{x\in{\mathcal N}_{\Gamma}}  
    \|\partial_n v_z-\pi_x(\partial_n v_z)\|_{L^2(\omega_x)}^2 \Big)^{1/2},
  \end{align*}
  i.\,e., in both cases we have to estimate $\sum_{x\in{\mathcal N}_{\Gamma}}  \|\partial_n v_z-\pi_x(\partial_n v_z)\|^2_{L^2(\omega_x)}$.\pagebreak[3]
  
  To this end  we  notice that 
  \begin{align*}
    v_z\in V&=\left(H^2(\Omega)\cap H^1_0(\Omega)\right)\oplus 
    \operatorname{Span} \{\xi(r)\,r^\lambda\sin(\lambda\theta)\}, 
     \end{align*}
     and consequently
      \begin{align*}
     \partial_n v_z\in V_\Gamma &=
    \left(\prod_{j=1}^N H^{1/2}_{00}(\Gamma_j)\right)\oplus 
    \operatorname{Span} \{\xi(r)\,r^{\lambda-1}\},
  \end{align*}
  see also the discussion in~\cite{ApelNicaisePfefferer2014a}. This
  means that we can split $\partial_n v_z= \alpha\xi(r)\,
  r^{\lambda-1} + \sum_{j=1}^N w_j$ with $w_j\in H^{1/2}_{00}(\Gamma_j)$ and
  \[
    |\alpha| + \sum_{j=1}^N \|w_j\|_{H^{1/2}_{00}(\Gamma_j)}
    =: \|\partial_n v_z\|_{V_\Gamma} \leq c \|v_z\|_V := 
    \|\Delta v_z\|_{L^2(\Omega)} = \|z\|_{L^2(\Omega)}. 
  \]
  By standard estimates we obtain 
  \[
    \left( \sum_{x\in{\mathcal N}_{\Gamma}}  
    \|w_j-\pi_xw_j\|_{L^2(\omega_x)}^2 \right)^{1/2}
    \le c h^{1/2} \|w_j\|_{H^{1/2}_{00}(\Gamma_j)}
  \]
  such that it remains to show that $\left(\sum_{x\in{\mathcal N}_{\Gamma}}\|\xi(r)\,r^{\lambda-1} -
  \pi_x(\xi(r)\,r^{\lambda-1})\|_{L^2(\omega_x)}^2\right)^{1/2}\le c h^{1/2}$  to
  conclude $\left(\sum_{x\in{\mathcal N}_{\Gamma}} \|\partial_n v_z-\pi_x(\partial_n
  v_z)\|_{L^2(\omega_x)}^2\right)^{1/2}\le c h^{1/2}\|z\|_{L^2(\Omega)}$. 

Denote by ${\mathcal N}_{\Gamma,\mathrm{reg}}\subset{\mathcal
    N}_{\Gamma}$ the set of nodes where $\omega_x$ does not contain
  the singular corner. We can estimate
  \begin{align*}
    &\sum_{x\in{\mathcal N}_{\Gamma,\mathrm{reg}}} 
    \|\xi(r)\,r^{\lambda-1} - \pi_x(\xi(r)\,r^{\lambda-1})\|_{L^2(\omega_x)}^2  \le
    c \sum_{x\in{\mathcal N}_{\Gamma,\mathrm{reg}}} 
    h_x^2 \|r^{\lambda-2}\|_{L^2(\omega_x)}^2 \\ & \le
    ch \sum_{x\in{\mathcal N}_{\Gamma,\mathrm{reg}}} 
    r_x^{1-\mu}r_x \|r^{\lambda-2}\|_{L^2(\omega_x)}^2  \le
    ch \int_0^{\mathrm{diam}\Omega} r^{2-\mu+2(\lambda-2) }\mathrm{d} r =ch
  \end{align*}
  for $\mu<2\lambda-1$. For the three nodes $x\in{\mathcal N}_{\Gamma}
  \setminus {\mathcal N}_{\Gamma,\mathrm{reg}}$ we cannot use the
  $H^1(\omega_x)$-regularity of $r^{\lambda-1}$ but there holds simply
  \begin{align*}
    \|\xi(r)\,r^{\lambda-1} - \pi_x(\xi(r)\,r^{\lambda-1})\|_{L^2(\omega_x)} &\le
    c\|r^{\lambda-1}\|_{L^2(\omega_x)}  \sim
    h_x^{\lambda-1}h_x^{1/2} \sim h^{(\lambda-1/2)/\mu} \sim h^{1/2}
  \end{align*}
  for  $\mu<2\lambda-1$. This finishes the proof.
\end{proof}

We consider now a lifting $\tilde B_hu^h\in Y_{*h}$ defined by the
nodal values as follows:
\begin{align}\label{def:Rh}
  (\tilde B_hu^h)(x)&= \begin{cases} u^h(x), &\text{for all nodes } x\in\Gamma,\\
    0 &\text{for all nodes } x\in\Omega.\end{cases}
\end{align}
The function $y^h$ and its finite element approximation $y_h\in
Y_{*h}=Y_*^h\cap Y_{h}$ are now defined by
\begin{equation}\label{eq:grad0}
  y^h=y_f+\tilde B_hu^h+\tilde y_0^h\quad\text{as well as}\quad 
  y_h=y_{fh}+\tilde B_hu^h+\tilde y_{0h},
\end{equation}
where $y_f,\tilde y_0^h\in H^1_0(\Omega)$ and $y_{fh},\tilde y_{0h}\in
Y_{0h}$ satisfy
\begin{align}
(\nabla y_f,\nabla v)_\Omega&=(f,v)_\Omega\quad \forall v\in H^1_0(\Omega),\label{eq:yf}\\
(\nabla y_{fh},\nabla v_h)_\Omega&=(f,v_h)_\Omega\quad \forall v_h\in Y_{0h}.\label{eq:yfh}\\
(\nabla \tilde y_0^h,\nabla v)_\Omega&=-(\nabla(\tilde B_h u^h),\nabla v)_\Omega\quad \forall v\in H^1_0(\Omega),\label{eq:grad1}\\
(\nabla \tilde y_{0h},\nabla v_h)_\Omega&=-(\nabla(\tilde B_h u^h),\nabla v_h)_\Omega\quad \forall v_h\in Y_{0h}.\label{eq:grad2}
\end{align}
In order to estimate $\|y^h-y_h\|_{L^2(\Omega)}$ we estimate
  $\|y_f-y_{fh}\|_{L^2(\Omega)}$ and $\|\tilde y_0^h-\tilde
  y_{0h}\|_{L^2(\Omega)}$.

\begin{lemma}\label{lem:yf-yfh}
  The error in approximating $y_f$ satisfies
  \[
    \|y_f-y_{fh}\|_{L^2(\Omega)} \le ch \|f\|_{H^{-1}(\Omega)}.
  \]
  if the mesh is graded with parameter $\mu<\lambda$. 
\end{lemma}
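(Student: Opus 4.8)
The plan is an Aubin--Nitsche duality argument that exploits the fact that, in contrast to the primal problem, the dual problem has an $L^2(\Omega)$ right-hand side and hence enjoys the regularity splitting \eqref{splittingreg/sing}. I would start from
\[
  \|y_f-y_{fh}\|_{L^2(\Omega)}=\sup_{z\in L^2(\Omega),\,z\neq0}
  \frac{(y_f-y_{fh},z)_\Omega}{\|z\|_{L^2(\Omega)}},
\]
and, for a fixed $z$, introduce the dual solution $w\in H^1_0(\Omega)$ of $(\nabla\varphi,\nabla w)_\Omega=(z,\varphi)_\Omega$ for all $\varphi\in H^1_0(\Omega)$, that is, the weak solution of \eqref{eq:thomas+} with datum $g=z$. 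By \eqref{splittingreg/sing} it admits the splitting $w=w_{\mathrm{reg}}+\gamma\,\S$ with $w_{\mathrm{reg}}\in\Vreg$ and $\|w_{\mathrm{reg}}\|_{H^2(\Omega)}+|\gamma|\le c\|z\|_{L^2(\Omega)}$.

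Writing $e:=y_f-y_{fh}$, subtracting \eqref{eq:yfh} from \eqref{eq:yf} yields the Galerkin orthogonality $(\nabla e,\nabla v_h)_\Omega=0$ for all $v_h\in Y_{0h}$. Inserting a nodal interpolant $I_hw\in Y_{0h}$ (admissible since both $w_{\mathrm{reg}}\in H^2(\Omega)\hookrightarrow C(\overline\Omega)$ and the bounded function $\S$ are continuous) then gives
\[
  (y_f-y_{fh},z)_\Omega=(\nabla e,\nabla w)_\Omega=(\nabla e,\nabla(w-I_hw))_\Omega
  \le\|\nabla e\|_{L^2(\Omega)}\,\|\nabla(w-I_hw)\|_{L^2(\Omega)}.
\]
The essential point is that I do \emph{not} require an energy convergence rate for $e$: since $f\in H^{-1}(\Omega)$ only, testing \eqref{eq:yf} with $y_f$ and \eqref{eq:yfh} with $y_{fh}$ provides merely the stability bounds $\|\nabla y_f\|_{L^2(\Omega)}\le\|f\|_{H^{-1}(\Omega)}$ and $\|\nabla y_{fh}\|_{L^2(\Omega)}\le\|f\|_{H^{-1}(\Omega)}$, hence $\|\nabla e\|_{L^2(\Omega)}\le c\|f\|_{H^{-1}(\Omega)}$. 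The full order $h$ must therefore be carried entirely by the dual interpolation error.

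The main work, and the only place where the grading parameter enters, is the estimate $\|\nabla(w-I_hw)\|_{L^2(\Omega)}\le ch\|z\|_{L^2(\Omega)}$. For the regular part $w_{\mathrm{reg}}\in\Vreg$ this is the standard first-order $H^1$-interpolation estimate, which holds on the graded family as well. The obstacle is the singular part $\S\notin H^2(\Omega)$, whose second derivatives blow up like $r^{\lambda-2}$ at the corner. I would estimate the local interpolation error element by element via $\|\nabla(\S-I_h\S)\|_{L^2(T)}\lesssim h_T\,|\S|_{H^2(T)}$ on elements away from the corner, sum over the elements using the grading relation $h_T\sim h\,r_T^{1-\mu}$ from \eqref{eq:gradingcondition}, and bound the resulting sum by a convergent integral of the type $\int_0^{\operatorname{diam}\Omega}r^{2(\lambda-\mu)-1}\,\mathrm{d}r$, which is finite precisely for $\mu<\lambda$. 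The finitely many elements touching the corner are handled separately by the plain local bound together with $h_T\sim h^{1/\mu}$ there, exactly in the spirit of the final steps in the proof of Lemma~\ref{lem:regerror}; they contribute a term of order $h^{\lambda/\mu}$, which is again $\lesssim h$ for $\mu<\lambda$. Combining the two factors gives $(y_f-y_{fh},z)_\Omega\le ch\|f\|_{H^{-1}(\Omega)}\|z\|_{L^2(\Omega)}$, and taking the supremum over $z$ concludes the proof.
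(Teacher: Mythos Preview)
Your proof is correct and follows essentially the same Aubin--Nitsche duality argument as the paper: introduce the dual solution with right-hand side $z$, use Galerkin orthogonality to insert a discrete approximant, and harvest the full order $h$ from the dual approximation error on the graded mesh while bounding the primal factor merely by $\|f\|_{H^{-1}(\Omega)}$. The paper's proof is marginally slicker in that it uses the Ritz projection $v_{zh}$ of the dual solution and applies Galerkin orthogonality a second time to reduce $\|\nabla(y_f-y_{fh})\|_{L^2(\Omega)}$ to $\|\nabla y_f\|_{L^2(\Omega)}$ alone, and it simply cites the graded-mesh estimate $\|\nabla(v_z-v_{zh})\|_{L^2(\Omega)}\le ch\|z\|_{L^2(\Omega)}$ for $\mu<\lambda$ as standard rather than sketching its proof as you do.
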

Note that the condition $\mu<\lambda$ is weaker than the condition
$\mu<2\lambda-1$ from Lemma~\ref{lem:regerror} since $\lambda<1$.

\begin{proof} 
    As in the proof of Lemma~\ref{lem:regerror}, let $z\in
    L^2(\Omega)$ be an arbitrary function, let $v_z\in V$ be defined
    via \eqref{eq:defvznew}, and let $v_{zh}\in Y_{0h}$ be the Ritz
    projection of $v_z$.  By the definitions \eqref{eq:yf} and
    \eqref{eq:yfh} and using the Galerkin orthogonality we get
  \begin{align*}
    (y_f-y_{fh},z)_\Omega &=
    (\nabla(y_f-y_{fh}),\nabla v_z)_\Omega =
    (\nabla(y_f-y_{fh}),\nabla (v_z-v_{zh}))_\Omega \\ &=
    (\nabla y_f,\nabla (v_z-v_{zh}))_\Omega \le
    \|\nabla y_f\|_{L^2(\Omega)} \, \|\nabla (v_z-v_{zh})\|_{L^2(\Omega)}
  \end{align*}
  By using standard a priori estimates we obtain with grading $\mu<\lambda$
  \begin{align*}
    \|\nabla y_f\|_{L^2(\Omega)} &\le \|f\|_{H^{-1}(\Omega)}, \\
    \|\nabla (v_z-v_{zh})\|_{L^2(\Omega)} &\le ch\|z\|_{L^2(\Omega)},
  \end{align*}
  and hence with 
  \[
    \|y_f-y_{fh}\|_{L^2(\Omega)} = \sup_{z\in L^2(\Omega), z\not=0}
    \frac{(y_f-y_{fh},z)_\Omega}{\|z\|_{L^2(\Omega)}} 
  \]
  the assertion of the lemma.
\end{proof}

In order to estimate $\|\tilde y_0^h-\tilde y_{0h}\|_{L^2(\Omega)}$,
we divide the domain $\Omega$ into subsets $\Omega_J$, i.e.,
\[
	\Omega = \bigcup_{J=0}^{I}\Omega_J,
\]
where $\Omega_J:=\{x:d_{J+1}\leq |x|\leq d_J\}$ for $J=1,\ldots,I-1$, $\Omega_I:=\{x:|x|\leq d_I\}$ and $\Omega_0:=\Omega\backslash\bigcup_{J=1}^I\Omega_J$. The radii $d_J$ are set to $2^{-J}$ and the index $I$ is chosen such that
\begin{equation}\label{eq:d_I}
	d_I=2^{-I}=c_Ih^{1/\mu}
\end{equation}
with a constant $c_I>1$ exactly specified later on. In addition we define the extended domains $\Omega_J'$ and $\Omega_J''$ by
\begin{align*}
	\Omega_J':=\Omega_{J-1}\cup\Omega_J\cup\Omega_{J+1}\quad\text{and}\quad\Omega_J'':=\Omega_{J-1}'\cup\Omega_J'\cup\Omega_{J+1}',
\end{align*}
respectively, with the obvious modifications for $J=0,1$ and $J=I-1,I$.

\begin{lemma}\label{lemma:grad1}
With $\sigma:=r+d_I$ there holds the estimate
\[
	\|\sigma^{(1-\mu)/2}\nabla \tilde y_0^h\|_{L^2(\Omega)}\leq c h^{-1/2}\|u\|_{L^2(\Gamma)}.
\]
\end{lemma}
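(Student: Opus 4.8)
The plan is to reduce the weighted gradient estimate to a purely elementwise bound on the nodal lifting $\tilde B_hu^h$, and then to transfer that bound to the continuous correction $\tilde y_0^h$ by a local energy argument on the dyadic decomposition $\Omega=\bigcup_J\Omega_J$ just introduced. First I would record that $g^h:=\tilde B_hu^h+\tilde y_0^h$ is exactly the continuous harmonic extension of $u^h$: since $\tilde B_hu^h|_\Gamma=u^h$, $\tilde y_0^h\in H^1_0(\Omega)$, and \eqref{eq:grad1} forces $(\nabla g^h,\nabla v)_\Omega=0$ for all $v\in H^1_0(\Omega)$. In particular $\tilde y_0^h$ is harmonic away from the one-element-thick boundary layer carrying $\tilde B_hu^h$, which is what allows the localization.

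The key computation is the elementwise weighted bound $\|\sigma^{(1-\mu)/2}\nabla(\tilde B_hu^h)\|_{L^2(\Omega)}\le c\,h^{-1/2}\|u\|_{L^2(\Gamma)}$. On a boundary element $T$ with edge $\gamma_T\subset\Gamma$, the lifting is linear, equals $u^h$ at the two boundary nodes and vanishes at the interior node, so that
\[
  \|\nabla(\tilde B_hu^h)\|_{L^2(T)}\sim h_T^{-1/2}\,\|u^h\|_{L^2(\gamma_T)}.
\]
Since $\sigma\sim r_T$ on $T$ (and $\sigma\sim d_I$ on the corner patch), the grading relation \eqref{eq:gradingcondition}, $h_T\sim h\,r_T^{1-\mu}$, produces the crucial cancellation $r_T^{1-\mu}h_T^{-1}\sim h^{-1}$, uniformly over the layer including the corner element where $h_T\sim h^{1/\mu}\sim d_I$ gives $d_I^{1-\mu}d_I^{-1}=d_I^{-\mu}\sim h^{-1}$. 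Summing over the boundary layer and using the $L^2(\Gamma)$-stability of both $\Pi_h$ and $C_h$, i.e. $\|u^h\|_{L^2(\Gamma)}\le\|u\|_{L^2(\Gamma)}$, yields the bound. This is precisely the step where the grading is tailored to compensate the blow-up of the boundary inverse inequality near the singular corner.

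It then remains to transfer this to $\tilde y_0^h$. I would write $\|\sigma^{(1-\mu)/2}\nabla\tilde y_0^h\|_{L^2(\Omega)}^2=\sum_{J=0}^I\int_{\Omega_J}\sigma^{1-\mu}|\nabla\tilde y_0^h|^2$ and use $\sigma\sim d_J$ on $\Omega_J$ (and $\sigma\sim d_I$ on $\Omega_I$) to pass to $\sum_J d_J^{1-\mu}\|\nabla\tilde y_0^h\|_{L^2(\Omega_J)}^2$. On each annulus I would apply a Caccioppoli-type local energy estimate, testing the equation for $\tilde y_0^h$ with $\eta_J^2\tilde y_0^h$ for a cut-off $\eta_J$ equal to $1$ on $\Omega_J$, supported in $\Omega_J'$, with $|\nabla\eta_J|\sim d_J^{-1}$, and exploiting that $g^h$ is harmonic. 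This gives
\[
  \|\nabla\tilde y_0^h\|_{L^2(\Omega_J)}\le c\Big(\|\nabla(\tilde B_hu^h)\|_{L^2(\Omega_J')}+d_J^{-1}\|\tilde y_0^h\|_{L^2(\Omega_J')}\Big).
\]
After weighting by $d_J^{1-\mu}$ and summing, the first contribution telescopes (the $\Omega_J'$ have finite overlap) to the quantity already controlled in the previous paragraph.

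The main obstacle is the lower-order term, which after weighting becomes $\sum_J d_J^{-1-\mu}\|\tilde y_0^h\|_{L^2(\Omega_J')}^2$, a negatively weighted $L^2$-mass of $\tilde y_0^h$ concentrated at the corner. Controlling it requires a weighted Hardy/Poincar\'e inequality of the form $\sum_J d_J^{-1-\mu}\|\tilde y_0^h\|_{L^2(\Omega_J')}^2\le c\sum_J d_J^{1-\mu}\|\nabla\tilde y_0^h\|_{L^2(\Omega_J')}^2$, legitimate because $\tilde y_0^h$ vanishes on $\Gamma$ and the two weight exponents differ by exactly two. The delicacy is that this reintroduces the quantity to be bounded, so the Hardy constant, together with the factor $(1-\mu)$ stemming from $\nabla\sigma^{1-\mu}$, must be small enough to be absorbed into the left-hand side. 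I expect this absorption, and the accompanying overlap bookkeeping for the $\Omega_J'$, to be the only non-routine part, and to be where one must check that the constants stay uniform for all $\mu\in(0,1]$, in particular in the strong-grading regime $\mu\to0$.
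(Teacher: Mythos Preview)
Your elementwise bound $\|\sigma^{(1-\mu)/2}\nabla(\tilde B_hu^h)\|_{L^2(\Omega)}\le ch^{-1/2}\|u\|_{L^2(\Gamma)}$ is correct and is also how the paper argues. The gap is in the lower-order term. In your Caccioppoli route that term arises from the cut-off gradient $|\nabla\eta_J|\sim d_J^{-1}$, \emph{not} from $\nabla\sigma^{1-\mu}$; hence there is no factor $(1-\mu)$ in front of it, contrary to what you suggest. After applying Hardy/Poincar\'e you obtain an inequality of the form $A^2\le cB^2+C\,A^2$ with $A=\|\sigma^{(1-\mu)/2}\nabla\tilde y_0^h\|_{L^2(\Omega)}$ and $C$ a product of Caccioppoli, Poincar\'e, and finite-overlap constants. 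Nothing forces $C<1$, in particular not in the strong-grading regime $\mu\to0$, so the absorption you single out as ``the only non-routine part'' in fact does not close.

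The paper avoids this obstacle altogether. It tests \eqref{eq:grad1} directly with $\tilde y_0^h\sigma^{1-\mu}$, which produces
\[
  A^2\le c\,\|\sigma^{(1-\mu)/2}\nabla(\tilde B_hu^h)\|_{L^2(\Omega)}\,A
         + A\,\|\sigma^{(-1-\mu)/2}\tilde y_0^h\|_{L^2(\Omega)},
\]
so that after dividing by $A$ one only needs a \emph{direct} bound on $\|\sigma^{(-1-\mu)/2}\tilde y_0^h\|_{L^2(\Omega)}$, not an absorption. The key idea you are missing is that this quantity is not thrown back onto $A$ via Hardy; instead one writes $\tilde y_0^h=(\tilde y_0^h+\tilde B_hu^h)-\tilde B_hu^h$, handles the $\tilde B_hu^h$ part elementwise, and for the harmonic-extension part $g^h=\tilde y_0^h+\tilde B_hu^h$ invokes the a~priori estimate $\|r^{-1/2}g^h\|_{L^2(\Omega)}\le c\|u^h\|_{L^2(\Gamma)}$ from \cite[Lemma~2.8 and Remark~2.7]{ApelNicaisePfefferer2014a}, combined with $\sigma^{(-1-\mu)/2}\le d_I^{-\mu/2}\sigma^{-1/2}\le ch^{-1/2}r^{-1/2}$. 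This is exactly the place where the harmonicity of $g^h$ you noted is used, but through an independent weighted trace-type bound rather than through a Caccioppoli/absorption argument.
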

\begin{proof}
We start by rearranging terms, i.e.,
\begin{align}
	\|\sigma^{(1-\mu)/2}\nabla \tilde y_0^h\|_{L^2(\Omega)}^2&=\int_{\Omega}\sigma^{1-\mu}\nabla \tilde y_0^h \cdot \nabla \tilde y_0^h\notag\\
	&=\int_{\Omega}\nabla \tilde y_0^h \cdot \nabla (\tilde y_0^h\sigma^{1-\mu})-\int_{\Omega}\tilde y_0^h\nabla \tilde y_0^h \cdot \nabla \sigma^{1-\mu}.\label{eq:grad3}
\end{align}
For the first term in \eqref{eq:grad3} we conclude according to \eqref{eq:grad1}
\begin{align}
\int_{\Omega}&\nabla \tilde y_0^h \cdot \nabla (\tilde y_0^h\sigma^{1-\mu})=-\int_{\Omega}\nabla (\tilde B_h u^h) \cdot \nabla (\tilde y_0^h\sigma^{1-\mu})\notag\\
&=-\int_{\Omega}\sigma^{1-\mu}\nabla (\tilde B_h u^h) \cdot \nabla \tilde y_0^h-\int_{\Omega}\tilde y_0^h\nabla (\tilde B_h u^h) \cdot \nabla \sigma^{1-\mu}\notag\\
&\leq \|\sigma^{(1-\mu)/2}\nabla (\tilde B_h u^h)\|_{L^2(\Omega)}\left(\|\sigma^{(1-\mu)/2}\nabla \tilde y_0^h\|_{L^2(\Omega)}+\|\sigma^{(-1-\mu)/2} \tilde y_0^h\|_{L^2(\Omega)}\right),\label{eq:grad4}
\end{align}
where we used the Cauchy-Schwarz inequality and
\begin{equation}\label{eq:nablasigma}
\nabla\sigma^{1-\mu}=(1-\mu)\sigma^{-\mu}(\cos\theta,\sin \theta)^T.
\end{equation}
Having in mind the decomposition of the domain in subdomains $\Omega_J$, an application of the Poincar\'e inequality yields for the latter term in \eqref{eq:grad4}
\begin{align*}
\|\sigma^{(-1-\mu)/2} \tilde y_0^h\|_{L^2(\Omega)}^2&=\sum_{J=0}^I\|\sigma^{(-1-\mu)/2} \tilde y_0^h\|_{L^2(\Omega_J)}\|\sigma^{(-1-\mu)/2} \tilde y_0^h\|_{L^2(\Omega_J)}\\
&\leq \sum_{J=0}^I d_J^{(-1-\mu)/2}\|\tilde y_0^h\|_{L^2(\Omega_J)}\|\sigma^{(-1-\mu)/2} \tilde y_0^h\|_{L^2(\Omega_J)}\\
&\leq c\sum_{J=0}^I d_J^{(1-\mu)/2}\|\nabla \tilde y_0^h\|_{L^2(\Omega_J')}\|\sigma^{(-1-\mu)/2} \tilde y_0^h\|_{L^2(\Omega_J)}\\
&\leq c\|\sigma^{(1-\mu)/2}\nabla \tilde y_0^h\|_{L^2(\Omega)}\|\sigma^{(-1-\mu)/2} \tilde y_0^h\|_{L^2(\Omega)},
\end{align*}
where we used $d_J\sim \sigma$ for $x\in\Omega_J'$ twice and the discrete Cauchy-Schwarz inequality.
Consequently, we get from \eqref{eq:grad4}
\begin{equation}\label{eq:grad5}
	\int_{\Omega}\nabla \tilde y_0^h \cdot \nabla (\tilde y_0^h\sigma^{1-\mu})\leq c\|\sigma^{(1-\mu)/2}\nabla (\tilde B_h u^h)\|_{L^2(\Omega)}\|\sigma^{(1-\mu)/2}\nabla \tilde y_0^h\|_{L^2(\Omega)}.
\end{equation}
Similarly to the above steps, we get for the second term in \eqref{eq:grad3} by means of \eqref{eq:nablasigma} 
\begin{align}
\int_{\Omega}&\tilde y_0^h\nabla \tilde y_0^h \cdot \nabla \sigma^{1-\mu}\leq \|\sigma^{(1-\mu)/2}\nabla \tilde y_0^h\|_{L^2(\Omega)}\|\sigma^{(-1-\mu)/2} \tilde y_0^h\|_{L^2(\Omega)}\notag\\
&\leq \|\sigma^{(1-\mu)/2}\nabla \tilde y_0^h\|_{L^2(\Omega)}\left(\|\sigma^{(-1-\mu)/2} (\tilde y_0^h+\tilde B_hu^h)\|_{L^2(\Omega)}+\|\sigma^{(-1-\mu)/2} \tilde B_hu^h\|_{L^2(\Omega)}\right),\label{eq:grad6}
\end{align}
such that we infer from \eqref{eq:grad3}, \eqref{eq:grad5} and \eqref{eq:grad6} that
\begin{align}
\|\sigma^{(1-\mu)/2}\nabla \tilde y_0^h\|_{L^2(\Omega)}&\leq c\left(\|\sigma^{(-1-\mu)/2} \tilde B_hu^h\|_{L^2(\Omega)}+\|\sigma^{(1-\mu)/2}\nabla (\tilde B_h u^h)\|_{L^2(\Omega)}\right.\notag\\
&\quad +\left.\|\sigma^{(-1-\mu)/2} (\tilde y_0^h+\tilde B_hu^h)\|_{L^2(\Omega)}\right).\label{eq:grad7}
\end{align}
Due to the definition of $\tilde B_h$ and the definition of the element size $h_T$ in case of graded meshes we easily obtain by means of the norm equivalence in finite dimensional spaces that
\begin{equation}\label{eq:grad8}
\|\sigma^{(-1-\mu)/2} \tilde B_hu^h\|_{L^2(\Omega)}
+\|\sigma^{(1-\mu)/2}\nabla (\tilde B_h u^h)\|_{L^2(\Omega)}\leq c h^{-1/2}\|u^h\|_{L^2(\Gamma)}\leq c h^{-1/2}\|u\|_{L^2(\Gamma)},
\end{equation}
where we employed the stability of $u^h$ in $L^2(\Gamma)$ in the last step. Having in mind the definition \eqref{eq:d_I} of $d_I$, we conclude by applying \cite[Lemma 2.8]{ApelNicaisePfefferer2014a} together with \cite[Remark~2.7]{ApelNicaisePfefferer2014a} that
\begin{align}
\|&\sigma^{(-1-\mu)/2} (\tilde y_0^h+\tilde B_hu^h)\|_{L^2(\Omega)}\leq d_I^{-\mu/2}\|\sigma^{-1/2} (\tilde y_0^h+\tilde B_hu^h)\|_{L^2(\Omega)}\notag\\
&\leq ch^{-1/2}\|r^{-1/2} (\tilde y_0^h+\tilde B_hu^h)\|_{L^2(\Omega)}\leq ch^{-1/2}\|u^h\|_{L^2(\Gamma)}\leq c h^{-1/2}\|u\|_{L^2(\Gamma)},\label{eq:grad9}
\end{align}
where we used again the stability of $u^h$. The estimates \eqref{eq:grad7}, \eqref{eq:grad8} and \eqref{eq:grad9} end the proof.
\end{proof}

\begin{lemma}\label{lem:yh-yh}
Let $\sigma:=r+d_I$ and $\mu\in(0,2\lambda-1)$. Then there is the estimate
\[
	\|\sigma^{-(1-\mu)/2}(\tilde y_0^h-\tilde y_{0h})\|_{L^2(\Omega)}\leq c h^{1/2}\|u\|_{L^2(\Gamma)}.
\]
\end{lemma}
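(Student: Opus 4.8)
The plan is to run a duality (Aubin--Nitsche) argument with weighted norms, the novelty being that the weight $\sigma^{-(1-\mu)/2}$ appearing on the error is dual to the weight $\sigma^{(1-\mu)/2}$ already controlled in Lemma~\ref{lemma:grad1}. First I observe that, subtracting \eqref{eq:grad1} from \eqref{eq:grad2}, the error $e:=\tilde y_0^h-\tilde y_{0h}$ enjoys the Galerkin orthogonality $(\nabla e,\nabla v_h)_\Omega=0$ for all $v_h\in Y_{0h}$; in other words $\tilde y_{0h}$ is the Ritz projection of $\tilde y_0^h$. To exploit this I write
\[
  \|\sigma^{-(1-\mu)/2}e\|_{L^2(\Omega)}=\sup_{\phi\in L^2(\Omega),\,\phi\neq0}\frac{(e,\sigma^{-(1-\mu)/2}\phi)_\Omega}{\|\phi\|_{L^2(\Omega)}},
\]
set $g:=\sigma^{-(1-\mu)/2}\phi$ (so that $\|\sigma^{(1-\mu)/2}g\|_{L^2(\Omega)}=\|\phi\|_{L^2(\Omega)}$), and let $w\in H^1_0(\Omega)$ solve $-\Delta w=g$. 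Testing with $e$ and inserting a quasi-interpolant $w_h:=I_hw\in Y_{0h}$ yields, by Galerkin orthogonality and a weighted Cauchy--Schwarz inequality,
\[
  (e,g)_\Omega=(\nabla e,\nabla(w-w_h))_\Omega\leq\|\sigma^{(1-\mu)/2}\nabla e\|_{L^2(\Omega)}\,\|\sigma^{-(1-\mu)/2}\nabla(w-w_h)\|_{L^2(\Omega)}.
\]

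The assertion then follows from two ingredients whose product is $h^{-1/2}\cdot h=h^{1/2}$. The first is the weighted energy bound
\[
  \|\sigma^{(1-\mu)/2}\nabla e\|_{L^2(\Omega)}\leq ch^{-1/2}\|u\|_{L^2(\Gamma)},
\]
which I expect to be the heart of the matter (see below). The second is the weighted interpolation estimate $\|\sigma^{-(1-\mu)/2}\nabla(w-w_h)\|_{L^2(\Omega)}\leq ch\,\|\sigma^{(1-\mu)/2}g\|_{L^2(\Omega)}$. Here I would argue elementwise on the graded mesh: on an element $T$ at distance $r_T\sim d_J$ one has $\sigma^{-(1-\mu)/2}\sim d_J^{-(1-\mu)/2}$ and $h_T\sim h\,d_J^{1-\mu}$, so that $\|\sigma^{-(1-\mu)/2}\nabla(w-w_h)\|_{L^2(T)}^2\leq c\,h^2 d_J^{1-\mu}|w|_{H^2(T)}^2$, and summation over $J$ gives $ch\,\|\sigma^{(1-\mu)/2}D^2w\|_{L^2(\Omega)}$. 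It then remains to invoke the weighted a~priori estimate $\|\sigma^{(1-\mu)/2}D^2w\|_{L^2(\Omega)}\leq c\|\sigma^{(1-\mu)/2}g\|_{L^2(\Omega)}$ for the dual solution. This is a Kondrat'ev-type weighted $H^2$-regularity result, valid because the weight exponent $(1-\mu)/2$ lies in the admissible range $(1-\lambda,1)$; this is precisely where the hypothesis $\mu<2\lambda-1$ enters, since it is equivalent to $(1-\mu)/2>1-\lambda$.

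The main obstacle is the weighted energy bound on $e$. Because the bilinear form is not coercive in the weighted norm, Cé\'ea's lemma does not apply directly; instead I would use the dyadic decomposition $\Omega=\bigcup_J\Omega_J$ together with the nested patches $\Omega_J'\subset\Omega_J''$ already introduced, writing $\|\sigma^{(1-\mu)/2}\nabla e\|^2\sim\sum_J d_J^{1-\mu}\|\nabla e\|_{L^2(\Omega_J)}^2$ and estimating each local energy by a Nitsche--Schatz local error estimate (local best approximation on $\Omega_J'$ plus a weaker pollution term controlled on $\Omega_J''$). The recurring difficulty is to move the variable weight $\sigma^{1-\mu}$ past the Galerkin orthogonality: for a finite element test function $\chi_h$ one splits $\int\sigma^{1-\mu}\nabla e\cdot\nabla\chi_h=\int\nabla e\cdot\nabla(\sigma^{1-\mu}\chi_h)-\int\chi_h\,\nabla e\cdot\nabla\sigma^{1-\mu}$ and replaces $\sigma^{1-\mu}\chi_h$ by its nodal interpolant, using the superapproximation property $\|\nabla(\sigma^{1-\mu}\chi_h-I_h(\sigma^{1-\mu}\chi_h))\|$ together with $\nabla\sigma^{1-\mu}=(1-\mu)\sigma^{-\mu}(\cos\theta,\sin\theta)^T$ from \eqref{eq:nablasigma}. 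Care is needed with the inverse inequalities on the strongly graded mesh (exactly the point flagged in the introduction): they must be applied patchwise with the local size $h_T\sim h\,d_J^{1-\mu}$ rather than with a global $h$, and the innermost disk $\Omega_I$, where $\sigma\sim d_I\sim h^{1/\mu}$ and the weight is bounded, is treated separately. After summation over $J$ the local best-approximation terms are bounded by $\|\sigma^{(1-\mu)/2}\nabla\tilde y_0^h\|$, which is $\leq ch^{-1/2}\|u\|_{L^2(\Gamma)}$ by Lemma~\ref{lemma:grad1}, while the accumulated pollution terms are absorbed, yielding the claimed bound and hence the lemma.
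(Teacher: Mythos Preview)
Your strategy matches the paper's in its essential ingredients: a weighted duality argument, Kondrat'ev-type regularity for the dual solution (valid precisely when $(1-\mu)/2>1-\lambda$, i.e.\ $\mu<2\lambda-1$), a dyadic decomposition, and local finite element error estimates followed by a kick-back. The difference is structural. The paper does \emph{not} prove the weighted energy bound $\|\sigma^{(1-\mu)/2}\nabla e\|_{L^2(\Omega)}\le ch^{-1/2}\|u\|_{L^2(\Gamma)}$ as a preliminary step. Instead it writes $\|\sigma^{-(1-\mu)/2}e\|_{L^2(\Omega)}^2=(\nabla e,\nabla(v-I_hv))_\Omega$ for the solution $v$ of $-\Delta v=\sigma^{-(1-\mu)}e$, splits into annuli, and bounds each $\|\nabla e\|_{L^2(\Omega_J)}$ by Demlow's local estimate, obtaining a best-approximation term $\|\nabla\tilde y_0^h\|_{L^2(\Omega_J'')}$ plus a pollution term $d_J^{-1}\|e\|_{L^2(\Omega_J')}$. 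After multiplication by the interpolation factor $h\,d_J^{(1-\mu)/2}$ and summation, the pollution contribution becomes exactly $c_I^{-\mu}\,\|\sigma^{-(1-\mu)/2}e\|_{L^2(\Omega)}\,|v|_{V^{2,2}_{(1-\mu)/2}(\Omega)}$ (because $h\,d_J^{-\mu}\le c_I^{-\mu}$), and is kicked back by choosing $c_I$ large; Lemma~\ref{lemma:grad1} then handles the remaining term.

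The gap in your write-up is the phrase ``the accumulated pollution terms are absorbed'' for the weighted energy bound on $e$. Taken on its own, the Nitsche--Schatz pollution contribution $\sum_J d_J^{1-\mu}d_J^{-2}\|e\|_{L^2(\Omega_J')}^2\sim\|\sigma^{-(1+\mu)/2}e\|_{L^2(\Omega)}^2$ carries no small constant and, via Poincar\'e on the annuli, is comparable to the weighted energy norm you are trying to bound---so there is nothing to absorb. What actually rescues your two-step scheme is to feed the duality estimate $\|\sigma^{-(1-\mu)/2}e\|\le ch\,\|\sigma^{(1-\mu)/2}\nabla e\|$ back into the pollution term (using $d_J^{-\mu}\le c_I^{-\mu}h^{-1}$) and only then kick back; but once this coupling is made explicit the argument collapses into the paper's single-chain version. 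Note also that the superapproximation manoeuvre you sketch does not apply directly, since $e=\tilde y_0^h-\tilde y_{0h}$ is not a finite element function.
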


\begin{proof}
Let $v\in H^1_0(\Omega)$ be the weak solution of
\[
	-\Delta v=\sigma^{-(1-\mu)}(\tilde y_0^h-\tilde y_{0h})\ \text{in } \Omega,\quad v=0\ \text{on }\Gamma,
\]
which,  according to Theorem 2.15 of \cite{BourlardDaugeLubumaNicaise90a}, has 
the regularity  
$v\in V^{2,2}_{(1-\mu)/2}(\Omega)$ (as $\mu <2\lambda-1)$ and hence $\frac12(1-\mu)>1-\lambda$)
and satisfies the a priori estimate
\begin{align}\label{eq:aprioriTA2016}
|v|_{V^{2,2}_{(1-\mu)/2}(\Omega)}\leq c\|\sigma^{-(1-\mu)}(\tilde y_0^h-\tilde y_{0h})\|_{V^{0,2}_{(1-\mu)/2}(\Omega)}\leq c \|\sigma^{-(1-\mu)/2}(\tilde y_0^h-\tilde y_{0h})\|_{L^2(\Omega)},
\end{align}
where we use the weighted Sobolev space 
$V^{k,2}_\beta(\Omega):=\{v\in\mathcal{D}':\|v\|_{V^{k,2}_\beta(\Omega)}<\infty\}$ with
\[
  \|v\|_{V^{k,2}_\beta(\Omega)}^2:=\sum_{j=1}^k |v|_{V^{j,2}_{\beta-k+j}(\Omega)}^2, \qquad
 |v|_{V^{j,2}_{\beta}(\Omega)}:=\|r^\beta \nabla^jv\|_{L^2(\Omega)}.
\]
Then we obtain by using integration by parts and the Galerkin orthogonality
\begin{align}
\lefteqn{\|\sigma^{-(1-\mu)/2}(\tilde y_0^h-\tilde y_{0h})\|_{L^2(\Omega)}^2=(\tilde y_0^h-\tilde y_{0h},-\Delta v)_\Omega}\notag\\
&=(\nabla(\tilde y_0^h-\tilde y_{0h}),\nabla(v-I_hv))_\Omega =\sum_{J=0}^I(\nabla(\tilde y_0^h-\tilde y_{0h}),\nabla(v-I_hv))_{\Omega_J}\notag\\
&\leq\sum_{J=0}^I\|\nabla(\tilde y_0^h-\tilde y_{0h})\|_{L^2(\Omega_J)}\|\nabla(v-I_hv)\|_{L^2(\Omega_J)}.\label{eq:grad10}
\end{align}

By employing standard interpolation error estimates on graded meshes we obtain for any $\mu\in(0,1]$ 
\begin{equation}\label{eq:grad11}
\|\nabla(v-I_hv)\|_{L^2(\Omega_J)}\leq chd_J^{(1-\mu)/2}|v|_{V^{2,2}_{(1-\mu)/2}(\Omega_J')},
\end{equation}
where the constant $c$  is independent of $c_I$, see e.g. \cite[Lemma 3.7]{ApelPfeffererRoesch:2012} or \cite[Lemma 3.58]{pfefferer:14}. In fact, the constant is essentially the one appearing in the local, elementwise interpolation error estimate.  Note that this kind of independence will be crucial when applying a kick back argument further below.

Local finite element error estimates from \cite[Theorem 3.4]{Demlow:2010} yield
\begin{align*}
\|\nabla(\tilde y_0^h-\tilde y_{0h})\|_{L^2(\Omega_J)}&\leq c\min_{v_h\in Y_{0h}}\left(\|\nabla(\tilde y_0^h-v_h)\|_{L^2(\Omega_J')}+\frac{1}{d_J}\|\tilde y_0^h-v_h\|_{L^2(\Omega_J')}\right)\notag\\
&\quad+c\frac{1}{d_J}\|\tilde y_0^h-\tilde y_{0h}\|_{L^2(\Omega_J')}.
\end{align*}
By choosing $v_h\equiv0$ and by applying the Poincar\'e inequality, we conclude
\begin{align}
\lefteqn{\|\nabla(\tilde y_0^h-\tilde y_{0h})\|_{L^2(\Omega_J)}\leq c\left(\|\nabla\tilde y_0^h\|_{L^2(\Omega_J')}+\frac{1}{d_J}\|\tilde y_0^h-\tilde y_{0h}\|_{L^2(\Omega_J')}\right)}\notag\\
&\leq c\left(\|\nabla\tilde y_0^h\|_{L^2(\Omega_J'')}+d_J^{(-1-\mu)/2}\|\sigma^{-(1-\mu)/2}(\tilde y_0^h-\tilde y_{0h})\|_{L^2(\Omega_J')}\right),\label{eq:grad12}
\end{align}
where we used $d_J\sim \sigma$ for $x\in\Omega_J'$. Consequently, we get from \eqref{eq:grad10}--\eqref{eq:grad12}
\begin{align*}
&\|\sigma^{-(1-\mu)/2}(\tilde y_0^h-\tilde y_{0h})\|_{L^2(\Omega)}^2\notag\\
&\leq c\sum_{J=0}^I\left(h\|\sigma^{(1-\mu)/2}\nabla \tilde y_0^h\|_{L^2(\Omega_J'')}+hd_J^{-\mu}\|\sigma^{-(1-\mu)/2}( \tilde y_0^h-\tilde y_{0h})\|_{L^2(\Omega_J')}\right)|v|_{V^{2,2}_{(1-\mu)/2}(\Omega_J')}\notag\\
&\leq c\left(h\|\sigma^{(1-\mu)/2}\nabla \tilde y_0^h\|_{L^2(\Omega)}+c_I^{-\mu}\|\sigma^{-(1-\mu)/2}(\tilde y_0^h-\tilde y_{0h})\|_{L^2(\Omega)}\right)|v|_{V^{2,2}_{(1-\mu)/2}(\Omega)},\label{eq:grad13}
\end{align*}
where we again employed $d_J\sim \sigma$ for $x\in\Omega_J''$, $hd_J^{-\mu}\leq c_I^{-\mu}$, which holds due to the definition \eqref{eq:d_I} of $d_I$, and the discrete Cauchy-Schwarz inequality. For $\mu\in(0,2\lambda-1)$ we infer by the a priori estimate \eqref{eq:aprioriTA2016}
that
\[
	\|\sigma^{-(1-\mu)/2}(\tilde y_0^h-\tilde y_{0h})\|_{L^2(\Omega)}\leq c\left(h\|\sigma^{(1-\mu)/2}\nabla \tilde y_0^h\|_{L^2(\Omega)}+c_I^{-\mu}\|\sigma^{-(1-\mu)/2}(\tilde y_0^h-\tilde y_{0h})\|_{L^2(\Omega)}\right).
\]
By choosing $c_I$ large enough we can kick back the second term in the above inequality such that Lemma \ref{lemma:grad1} yields the desired result.
\end{proof}

\begin{theorem}
For $\mu\in(0,2\lambda-1)$ we get
\begin{align}\label{eq:gradingfinalestimate}
  \|y-y_h\|_{L^2(\Omega)}\leq c h^{1/2}
    \left(\|u\|_{L^2(\Omega)} + h^{1/2}\|f\|_{H^{-1}(\Omega)}\right).
\end{align}
\end{theorem}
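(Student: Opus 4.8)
The plan is to assemble the theorem from the three preceding lemmas by two applications of the triangle inequality; the substantive work has already been carried out in those lemmas, so what remains is essentially bookkeeping together with one small observation about the weight $\sigma$. First I would separate the regularization error from the discretization error,
\[
  \|y-y_h\|_{L^2(\Omega)}\le \|y-y^h\|_{L^2(\Omega)}+\|y^h-y_h\|_{L^2(\Omega)},
\]
and treat the first term directly with Lemma~\ref{lem:regerror}, which applies because $\mu<2\lambda-1$ and which already delivers the bound $ch^{1/2}(\|u\|_{L^2(\Omega)}+h^{1/2}\|f\|_{H^{-1}(\Omega)})$ claimed in \eqref{eq:gradingfinalestimate}.

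For the discretization error I would use the decompositions \eqref{eq:grad0}. Since the lifting $\tilde B_hu^h$ appears identically in $y^h$ and $y_h$, it cancels in the difference and leaves
\[
  y^h-y_h=(y_f-y_{fh})+(\tilde y_0^h-\tilde y_{0h}),
\]
whence $\|y^h-y_h\|_{L^2(\Omega)}\le\|y_f-y_{fh}\|_{L^2(\Omega)}+\|\tilde y_0^h-\tilde y_{0h}\|_{L^2(\Omega)}$. The first summand is bounded by $ch\|f\|_{H^{-1}(\Omega)}$ through Lemma~\ref{lem:yf-yfh}, whose hypothesis $\mu<\lambda$ is implied by $\mu<2\lambda-1$ as remarked after that lemma.

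The only step needing a moment's care is the second summand, for which Lemma~\ref{lem:yh-yh} supplies only a weighted estimate. Here I would pass from the weighted norm to the plain $L^2(\Omega)$-norm by noting that the weight is bounded in the favourable direction: with $\sigma=r+d_I$ one has $r\le\mathrm{diam}\,\Omega$ and, for $h$ small, $d_I\le c$, so $\sigma$ is bounded above, and since $\mu<1$ the exponent $(1-\mu)/2$ is positive, giving $\sigma^{(1-\mu)/2}\le c$. Inserting $1=\sigma^{(1-\mu)/2}\sigma^{-(1-\mu)/2}$ then yields
\[
  \|\tilde y_0^h-\tilde y_{0h}\|_{L^2(\Omega)}
  \le \|\sigma^{(1-\mu)/2}\|_{L^\infty(\Omega)}\,
  \|\sigma^{-(1-\mu)/2}(\tilde y_0^h-\tilde y_{0h})\|_{L^2(\Omega)}
  \le ch^{1/2}\|u\|_{L^2(\Gamma)}.
\]

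Finally I would collect the three contributions: the two $f$-terms are each of order $h\|f\|_{H^{-1}(\Omega)}=h^{1/2}\cdot h^{1/2}\|f\|_{H^{-1}(\Omega)}$ and the two $u$-terms are each of order $h^{1/2}\|u\|$, so their sum is exactly \eqref{eq:gradingfinalestimate}. I expect no genuine obstacle at the level of the theorem itself; all the difficulty is concentrated in the lemmas, above all in the local error estimates and kick-back argument of Lemma~\ref{lem:yh-yh}. The single subtlety in the present assembly is the transition from the weighted norm of Lemma~\ref{lem:yh-yh}, which carries a \emph{negative} power of $\sigma$, to the plain $L^2(\Omega)$-norm; this is legitimate precisely because $\sigma$ is bounded above, so that $\sigma^{-(1-\mu)/2}$ is bounded below and the weighted norm dominates the unweighted one.
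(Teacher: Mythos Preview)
Your proof is correct and follows essentially the same approach as the paper: the same three-term split via \eqref{eq:grad0}, the same appeal to Lemmata~\ref{lem:regerror}, \ref{lem:yf-yfh}, and \ref{lem:yh-yh}, and the same passage from the weighted norm of Lemma~\ref{lem:yh-yh} to the plain $L^2(\Omega)$-norm using the uniform boundedness of $\sigma^{(1-\mu)/2}$. The only cosmetic difference is that you split in two stages (first $y-y^h$ versus $y^h-y_h$, then the latter into two pieces) whereas the paper writes the three-term triangle inequality at once.
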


\begin{proof}
  Due to the boundedness of $\sigma^{(1-\mu)/2}$ independent of $h$
  for all $\mu\in(0,1]$ we obtain from Lemma \ref{lem:yh-yh}
  \begin{align}
    \|\tilde y_0^h-\tilde y_{0h}\|_{L^2(\Omega)} &\le
    \|\sigma^{(1-\mu)/2}\|_{L^\infty(\Omega)}
    \|\sigma^{-(1-\mu)/2}(\tilde y_0^h-\tilde y_{0h})\|_{L^2(\Omega)}  \label{eq:estthirdterm}
    \leq c h^{1/2}\|u\|_{L^2(\Gamma)}.
  \end{align}
  In view of \eqref{eq:grad0} we get by using the triangle inequality
  \[
    \|y-y_h\|_{L^2(\Omega)} \le \|y-y^h\|_{L^2(\Omega)} + 
    \|y_f-y_{fh}\|_{L^2(\Omega)} + \|\tilde y_0^h-\tilde y_{0h}\|_{L^2(\Omega)}.
  \]
  These three terms are bounded by the right hand side of
  \eqref{eq:gradingfinalestimate} in Lemmata \ref{lem:regerror} and
  \ref{lem:yf-yfh} as well as in \eqref{eq:estthirdterm}.
\end{proof}

\section{The dual singular complement method}

\subsection{Analytical background and regularization}

Using the notation of the previous section, we recall that the splitting \eqref{splittingreg/sing}
\[
  \V=\left(H^2(\Omega)\cap H^1_0(\Omega)\right)\oplus \Span \{\S\},
\]
implies that
\[
  R:=\{\Delta v: v\in H^2(\Omega)\cap H^1_0(\Omega)\},
\]
is a closed subspace of $L^2(\Omega)$. It is shown in \cite[Sect.
2.3]{grisvard:92b} that
\begin{align}\label{eq:serge20/06:1}
  L^2(\Omega)=R \overset{\perp}{\oplus} \Span \{\Sd\},
\end{align}
with the \emph{dual singular function}
\begin{align}\label{def:ps}
  p_s=r^{-\lambda}\sin(\lambda\theta)+\tilde p_s
\end{align}
where $\tilde p_s\in H^1(\Omega)$ is chosen such that the
decomposition \eqref{eq:serge20/06:1} is orthogonal for the
$L^2(\Omega)$ inner product. Therefore, the dual singular function
$p_s$ is a solution of
\begin{align}\label{eq:thomas*}
  w\in L^2(\Omega):\quad (\Delta v,w)=0 
  \quad\forall v\in H^2(\Omega)\cap H^1_0(\Omega),
\end{align}
which proves the non-uniqueness of the solution of \eqref{eq:thomas*}.
This is the dual property to the non-existence of a solution of
\eqref{eq:thomas+} in $H^2(\Omega)\cap H^1_0(\Omega)$, see
\cite[Introduction]{grisvard:92b}.

Due to \eqref{eq:serge20/06:1} we can split any $L^2(\Omega)$-function
into $L^2(\Omega)$-orthogonal parts. To this end denote by $\Pi_R$ and
$\Pi_{\Sd}$ the orthogonal projections on $R$ and on $\Span\{\Sd\}$,
respectively, i.e., for $g\in L^2(\Omega)$, it is
$g=\Pi_Rg+\Pi_{\Sd}g$ where
\begin{align*}
  \Pi_{\Sd} g&=\alpha(g)\, \Sd \quad\text{with}\quad
  \alpha(g) =\frac{(g,\Sd)_\Omega}{\|\Sd\|_{L^2(\Omega)}^2}, \\
  \Pi_R g&=g-\Pi_{\Sd} g.
\end{align*}
Since $\Sd\in L^2(\Omega)$  there exists 
\begin{align}\label{def:phis}
\Fd\in \V:\quad -\Delta \Fd= \Sd,
\end{align}
see also Section \ref{sec:scm} for more details on $\Fd$.
For the moment we assume that $\Sd$ and $\Fd$ are explicitly known;
hence the decomposition $g=\Pi_Rg+\alpha(g)\, \Sd$ can be computed
once $g$ is given. Computable approximations of $\Sd$ and $\Fd$ are
discussed in Section \ref{sec:scm}.

Now we come back to problem \eqref{eq:veryweak2} and decompose its
solution $y$ in the form
\begin{align}\label{eq:serge20/06:2}
y=\Pi_Ry+\alpha(y)\, \Sd.
\end{align}
From the decomposition \eqref{eq:serge20/06:1} we
see that problem \eqref{eq:veryweak2} is equivalent to
\begin{align*}
   (y, \Sd)_\Omega &=
  -(u,\partial_n \Fd)_\Gamma + (f,\Fd)_\Omega,
\\ 
 (y, \Delta v)_\Omega &=
  (u,\partial_n v)_\Gamma - (f,v)_\Omega \quad\forall
  v\in H^2(\Omega)\cap H^1_0(\Omega)
\end{align*}
and with the orthogonal splitting \eqref{eq:serge20/06:2} to
\begin{align*}
   \alpha(y)\,(\Sd, \Sd)_\Omega &=
  -(u,\partial_n \Fd)_\Gamma + (f,\Fd)_\Omega,
\\ 
 (\Pi_Ry, \Delta v)_\Omega &=
  (u,\partial_n v)_\Gamma - (f,v)_\Omega \quad\forall
  v\in H^2(\Omega)\cap H^1_0(\Omega).
\end{align*}
The first equation directly yields $\alpha(y)$, namely
\begin{align}\label{eq:serge20/06:7}
  \alpha(y) =\frac{-(u,\partial_n \Fd)_\Gamma+(f,\Fd)_\Omega}%
  {(\Sd, \Sd)_\Omega},
\end{align}
hence the projection of $y$ on $\Sd$ is known. It remains to find an
approximation of $\Pi_Ry$.

At this point we recall the regularization approach from
\cite{ApelNicaisePfefferer2014a} which we summarized already in the
introduction. Let $u^h\in H^{1/2}(\Gamma)$ be a regularized boundary
datum (this can be any, e.\,g. $\Pi_hu$ or $C_hu$ from Section
\ref{sec:graded}, but we do not assume graded meshes here) such that
we can define the regularized (weak) solution in $Y_*^h:=\{v\in
H^1(\Omega): v|_\Gamma=u^h\}$,
\begin{align}\label{eq:regsol:repeat}
  y^h\in Y_*^h: \quad (\nabla y^h,\nabla v)_\Omega =
  (f,v)_\Omega \quad\forall v\in H^1_0(\Omega).
\end{align}
In \cite[Remark 2.13]{ApelNicaisePfefferer2014a} we showed that the regularization
error can be estimated by
\begin{align*}
  \|y-y^h\|_{L^2(\Omega)}\le c \|u-u^h\|_{H^{-s}(\Gamma)}
\end{align*}
where $0<s<\lambda-\frac12$ (if $\Omega$ was convex
we would get $s=\frac12$, that means
the regularization error is in general bigger in the non-convex case).
With the next lemma we show that $\Pi_R(y-y^h)$ is not affected by
non-convex corners.

\begin{lemma}\label{lem:regularizationerrornonconvex}
 The estimate
  \begin{align*}
    \|\Pi_R(y- y^h)\|_{L^2(\Omega)}\le C\|u-u^h\|_{H^{-1/2}(\Gamma)}
  \end{align*}
  holds.
\end{lemma}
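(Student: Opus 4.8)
The plan is to exploit the $L^2(\Omega)$-orthogonal decomposition \eqref{eq:serge20/06:1} in order to reduce the $L^2(\Omega)$-norm of $\Pi_R(y-y^h)$ to a quantity controlled by a \emph{duality argument} against the regular part of the space. The key structural observation is that the difference $y-y^h$ is a very weak solution of the homogeneous-data problem with boundary datum $u-u^h$: subtracting \eqref{eq:yh_veryweak2} from \eqref{eq:veryweak2} gives
\begin{align}\label{eq:planA}
  (y-y^h,\Delta v)_\Omega = (u-u^h,\partial_n v)_\Gamma \quad\forall v\in V.
\end{align}
The crucial point is that the left-hand side only ``sees'' $y-y^h$ through $\Delta v$, i.e.\ through the subspace $R=\{\Delta v: v\in H^2(\Omega)\cap H^1_0(\Omega)\}$. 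Since $\Pi_R$ is the $L^2(\Omega)$-orthogonal projection onto $R$ and $\Span\{p_s\}$ is its orthogonal complement, I would represent the norm by the supremum
\[
  \|\Pi_R(y-y^h)\|_{L^2(\Omega)} = \sup_{g\in R,\, g\neq 0}
  \frac{(y-y^h,g)_\Omega}{\|g\|_{L^2(\Omega)}}.
\]

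First I would fix an arbitrary $g\in R$ and write $g=\Delta v$ for the (unique) $v\in H^2(\Omega)\cap H^1_0(\Omega)$ with $\|v\|_{H^2(\Omega)}\le c\|g\|_{L^2(\Omega)}$; this $v$ exists and is $H^2$-regular precisely because $g$ lies in the regular range $R$, which is exactly the content of the splitting \eqref{splittingreg/sing}--\eqref{eq:serge20/06:1}. Substituting $v$ into \eqref{eq:planA} yields $(y-y^h,g)_\Omega = (u-u^h,\partial_n v)_\Gamma$. Now I would bound the boundary pairing by duality between $H^{-1/2}(\Gamma)$ and $H^{1/2}(\Gamma)$,
\[
  (u-u^h,\partial_n v)_\Gamma \le \|u-u^h\|_{H^{-1/2}(\Gamma)}\,
  \|\partial_n v\|_{H^{1/2}(\Gamma)}.
\]
The trace theorem for the normal derivative of an $H^2(\Omega)$ function gives $\|\partial_n v\|_{H^{1/2}(\Gamma)}\le c\|v\|_{H^2(\Omega)}\le c\|g\|_{L^2(\Omega)}$, and dividing by $\|g\|_{L^2(\Omega)}$ and taking the supremum over $g\in R$ produces the claimed estimate.

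I expect the main obstacle to be the justification that, when testing against $g\in R$, only the \emph{regular} lifting $v\in H^2(\Omega)\cap H^1_0(\Omega)$ is needed, so that $\partial_n v$ lands in $H^{1/2}(\Gamma)$ rather than merely in the weaker trace space $V_\Gamma$ containing the singular part $\xi(r)r^{\lambda-1}$ that appeared in the proof of Lemma~\ref{lem:regerror}. This is the whole point of the lemma: restricting the test function to the regular summand removes the corner singularity from the dual problem, upgrading the trace from $H^{\lambda-1/2-\varepsilon}(\Gamma)$ to full $H^{1/2}(\Gamma)$ and thereby eliminating the convergence-order reduction. I would make this rigorous by appealing directly to the orthogonal decomposition \eqref{eq:serge20/06:1}: for $g\in R$ the preimage under $\Delta$ sits in $H^2(\Omega)\cap H^1_0(\Omega)$ by definition of $R$, so no singular complement enters, and the $H^{1/2}(\Gamma)$ trace bound applies without any restriction on the opening angle $\omega$. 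A secondary technical care point is to confirm that $u-u^h\in H^{-1/2}(\Gamma)$ so that the duality pairing is well defined, which holds because $u\in L^2(\Gamma)\hookrightarrow H^{-1/2}(\Gamma)$ and $u^h\in H^{1/2}(\Gamma)\subset H^{-1/2}(\Gamma)$.
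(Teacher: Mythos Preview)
Your proof is correct and follows essentially the same route as the paper: derive the identity $(y-y^h,\Delta v)_\Omega=(u-u^h,\partial_n v)_\Gamma$ for $v\in V$, represent $\|\Pi_R(y-y^h)\|_{L^2(\Omega)}$ as a supremum over $g\in R$, lift $g=\Delta v_z$ with $v_z\in H^2(\Omega)\cap H^1_0(\Omega)$, and use the $H^{1/2}(\Gamma)$ trace bound for $\partial_n v_z$. The only cosmetic difference is that the paper re-derives \eqref{eq:planA} from \eqref{eq:regsol:repeat} via the Green formula (citing \cite[Lemma~3.4]{Costabel1988}) rather than invoking \eqref{eq:yh_veryweak2}, and it spells out the intermediate identity $(\Pi_R(y-y^h),g)_\Omega=(y-y^h,\Pi_R g)_\Omega$ to justify the supremum representation.
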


\begin{proof}
  Recall $V=H^1_\Delta(\Omega)\cap H^1_0(\Omega)$ from \eqref{eq:H1Delta}.
  From \eqref{eq:regsol:repeat} and the Green formula, we have for any
  $v\in V$
  \begin{align*}
    (f,v)_\Omega = (\nabla y^h,\nabla v)_\Omega =
    -( y^h,\Delta v)_\Omega + (y^h, \partial_n v)_\Gamma .
  \end{align*}
  Note that $v\in V$ is sufficient, see  \cite[Lemma 3.4]{Costabel1988}.
  Subtracting this expression from the very weak formulation
  \eqref{eq:veryweak2}, we get
  \[
    (y-y^h,\Delta v)_\Omega=(u-u^h, \partial_n v)_\Gamma 
    \quad\forall v\in V.
  \]
  Restricting this identity to $v\in H^2(\Omega)\cap H^1_0(\Omega)$, we have
  \begin{align}\label{eq:serge20/06:5}
    ( \Pi_R(y-y^h),\Delta v)_\Omega=(u-u^h, \partial_n
    v)_\Gamma \quad \forall v\in H^2(\Omega)\cap H^1_0(\Omega).
  \end{align}
  Now for any $z\in R$, we let $v_z\in H^2(\Omega)\cap H^1_0(\Omega)$
  be the unique solution of
  \begin{align}\label{eq:sergepbDir}
    \Delta v_z=z,
  \end{align}
  that satisfies
  \begin{align}\label{eq:serge2}
    \|\partial_n v_z\|_{H^{1/2}(\Gamma)} \le 
    c\|v_z\|_{H^2(\Omega)}\le c \|z\|_{L^2(\Omega)}.
  \end{align}
  Since for any $g\in L^2(\Omega)$ the equality
  \[
    (\Pi_R(y-y^h),g)_\Omega=
    (\Pi_R(y-y^h),\Pi_Rg)_\Omega= 
    (y-y^{h},\Pi_Rg)_\Omega
  \]
  holds we get with \eqref{eq:serge20/06:5}--\eqref{eq:serge2}
  \begin{align*}
    \|\Pi_R(y-y^h)\|_{L^2(\Omega)}&= \sup_{z\in R, z\ne 0}
    \frac{(y-y^{h}, z)_\Omega}{\|z\|_{L^2(\Omega)}} =
    \sup_{z\in R, z\ne 0}
    \frac{(u-u^h,\partial_n v_z)_\Gamma}{\|z\|_{L^2(\Omega)}} \\ &\leq
    \|u-u^h\|_{H^{-1/2}(\Gamma)} \sup_{z\in R, z\ne 0}
    \frac{\|\partial_n v_z\|_{H^{1/2}(\Gamma)}}{\|z\|_{L^2(\Omega)}} \le c
    \|u-u^h\|_{H^{-1/2}(\Gamma)}
  \end{align*}
  which is the estimate to be proved.
\end{proof}

\subsection{\label{sec:3}Discretization by standard finite elements}

Recall from \eqref{eq:discretespaces} the finite element spaces
\begin{align*}
  Y_h = \{v_h\in H^1(\Omega): v_h|_T\in\mathcal{P}_1\ \forall
  T\in\mathcal{T}_h\}, \quad Y_{0h} = Y_h\cap H^1_0(\Omega),\quad
  Y_h^\partial = Y_h|_{\partial\Omega},
\end{align*}
defined now on a family $\mathcal{T}_h$ of quasi-uniform, conforming
finite element meshes. Assume that the regularized boundary datum $u^h$ is
contained in $Y_h^\partial$ such that the estimates
\begin{align}\label{eq:uhstable}
  \|u^h\|_{L^2(\Gamma)}&\le c\|u\|_{L^2(\Gamma)},\\ \label{eq:carstensen}
  \|u-u^h\|_{H^{-1/2}(\Gamma)}&\le Ch^{1/2}\|u\|_{L^2(\Gamma)},
\end{align}
hold. It can be derived from \cite[Lemma 2.14]{ApelNicaisePfefferer2014a} that this can
be accomplished by using the $L^2(\Gamma)$-projection or by
quasi-interpolation. A consequence of Lemma
\ref{lem:regularizationerrornonconvex} is the estimate
\begin{align}\label{est:regularizationerrornonconvex}
  \|\Pi_R(y- y^h)\|_{L^2(\Omega)}\le Ch^{1/2}\|u\|_{L^2(\Gamma)}.
\end{align}
(In the case of a convex
domain the operator $\Pi_R$ is the identity, and the corresponding
error estimates were already proven in
\cite{ApelNicaisePfefferer2014a}.)

As already done in the introduction, define further the finite element solution $y_h\in Y_{*h}:= Y_*^h\cap
Y_h$ via
\begin{align}\label{eq:serge20/06:6repeat}
  y_h\in Y_{*h}:\quad (\nabla y_h,\nabla v_h)_\Omega =
  (f,v_h)_\Omega\quad\forall v_h\in Y_{0h}.
\end{align}
We proved in \cite{ApelNicaisePfefferer2014a} that in the case of quasi-uniform meshes $\mathcal{T}_h$
\begin{align}
  \|y-y_h\|_{L^2(\Omega)}\le Ch^s
  \left(h^{1/2}\|f\|_{H^{-1}(\Omega)}+\|u\|_{L^2(\Gamma)}\right)\label{jonny:fe_error}
\end{align}
holds for $s\in(0,\lambda-\frac12)$ (again $s=\frac12$ for convex domains).  As before, in the next lemma
we show that $\Pi_R(y-y_h)$ is not affected by the non-convex corners.

\begin{lemma}\label{lem:3.1}
The discretization error estimate
  \begin{align*}
    \|\Pi_R(y-y_h)\|_{L^2(\Omega)}\le C  h^{1/2} 
    \left(h^{1/2}\|f\|_{H^{-1}(\Omega)}+ \|u\|_{L^2(\Gamma)}\right)
  \end{align*}
  holds. 
\end{lemma}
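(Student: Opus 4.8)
The plan is to compare $y_h$ with the regularized solution $y^h$ and to exploit that the projection $\Pi_R$ removes the corner singularity from the dual problem. By the triangle inequality,
\[
  \|\Pi_R(y-y_h)\|_{L^2(\Omega)} \le \|\Pi_R(y-y^h)\|_{L^2(\Omega)} + \|\Pi_R(y^h-y_h)\|_{L^2(\Omega)},
\]
where the first term is already bounded by $Ch^{1/2}\|u\|_{L^2(\Gamma)}$ according to \eqref{est:regularizationerrornonconvex}. It therefore remains to estimate the purely discrete error $\|\Pi_R(y^h-y_h)\|_{L^2(\Omega)}$. The crucial structural observation is that $y^h$ and $y_h$ share the same boundary datum $u^h$, so that $y^h-y_h\in H^1_0(\Omega)$ and the Galerkin orthogonality $(\nabla(y^h-y_h),\nabla v_h)_\Omega=0$ holds for all $v_h\in Y_{0h}$ by \eqref{eq:regsol:repeat} and \eqref{eq:serge20/06:6repeat}.

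To estimate $\Pi_R(y^h-y_h)$ I would use duality as in Lemma~\ref{lem:regularizationerrornonconvex}, but now restricted to $R$: for $z\in R$ let $v_z\in H^2(\Omega)\cap H^1_0(\Omega)$ solve $-\Delta v_z=z$, which is admissible since $R=\Delta(H^2(\Omega)\cap H^1_0(\Omega))$ and which enjoys the \emph{full} a priori estimate $\|v_z\|_{H^2(\Omega)}\le c\|z\|_{L^2(\Omega)}$. This is the key point: projecting onto $R$ makes the dual solution $H^2$-regular, so that no corner singularity enters. Since $y^h-y_h$ vanishes on $\Gamma$, integration by parts (the boundary term drops out) and the Galerkin orthogonality (subtracting $\nabla I_hv_z$ with $I_hv_z\in Y_{0h}$) give
\[
  (y^h-y_h,z)_\Omega=(\nabla(y^h-y_h),\nabla(v_z-I_hv_z))_\Omega\le \|\nabla(y^h-y_h)\|_{L^2(\Omega)}\,\|\nabla(v_z-I_hv_z)\|_{L^2(\Omega)}.
\]
With the standard interpolation estimate $\|\nabla(v_z-I_hv_z)\|_{L^2(\Omega)}\le ch\,|v_z|_{H^2(\Omega)}\le ch\|z\|_{L^2(\Omega)}$ and the characterization of the $L^2(\Omega)$-norm as the supremum over $z\in R$, this yields $\|\Pi_R(y^h-y_h)\|_{L^2(\Omega)}\le ch\,\|\nabla(y^h-y_h)\|_{L^2(\Omega)}$.

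It then remains to bound the energy error $\|\nabla(y^h-y_h)\|_{L^2(\Omega)}$, and here I would use only C\'ea-type stability rather than regularity, since the gained factor $h$ will be balanced against an inverse estimate. Because $y^h-y_h\in H^1_0(\Omega)$, testing the orthogonality against $y^h-y_h$ gives $\|\nabla(y^h-y_h)\|_{L^2(\Omega)}\le \|\nabla(y^h-w_h)\|_{L^2(\Omega)}$ for every $w_h\in Y_{*h}$; choosing $w_h=\tilde B_hu^h$ from \eqref{def:Rh} and using the decomposition \eqref{eq:grad0} leaves $\|\nabla(y_f+\tilde y_0^h)\|_{L^2(\Omega)}\le \|\nabla y_f\|_{L^2(\Omega)}+\|\nabla\tilde y_0^h\|_{L^2(\Omega)}$. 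The first summand is bounded by $\|f\|_{H^{-1}(\Omega)}$ by testing \eqref{eq:yf} with $y_f$, and the second by $\|\nabla\tilde B_hu^h\|_{L^2(\Omega)}\le ch^{-1/2}\|u\|_{L^2(\Gamma)}$ via the inverse estimate \eqref{eq:grad8} (specialized to quasi-uniform meshes, $\mu=1$) together with the stability \eqref{eq:uhstable}. Altogether $\|\nabla(y^h-y_h)\|_{L^2(\Omega)}\le c(\|f\|_{H^{-1}(\Omega)}+h^{-1/2}\|u\|_{L^2(\Gamma)})$, and multiplying by the factor $h$ produces $c(h\|f\|_{H^{-1}(\Omega)}+h^{1/2}\|u\|_{L^2(\Gamma)})$, which combines with the regularization bound to give the asserted estimate. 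The main obstacle is conceptual rather than technical: one must recognize that the factor $h$ from the regular dual problem exactly cancels the unavoidable $h^{-1/2}$ blow-up of the discrete lifting, so that the corner singularity, which is harmless inside $R$, never degrades the rate below $1/2$.
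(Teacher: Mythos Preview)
Your proof is correct and follows essentially the same route as the paper: the triangle inequality splitting into the regularization error and the Galerkin error, the duality argument restricted to $R$ so that the dual solution is in $H^2(\Omega)$, and the C\'ea bound combined with the discrete lifting and the inverse inequality to control $\|\nabla(y^h-y_h)\|_{L^2(\Omega)}$. The only cosmetic differences are that the paper works with the single homogenized function $y_0^h=y^h-\tilde B_hu^h$ instead of the split $y_f+\tilde y_0^h$, and derives the inverse estimate \eqref{eq:serge3} directly rather than specializing \eqref{eq:grad8} to $\mu=1$.
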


\begin{proof}
  By the triangle inequality we have
  \begin{align} \label{thomas:triangleinequality0207}
    \|\Pi_R(y-y_h)\|_{L^2(\Omega)} &\le 
    \|\Pi_R(y-y^h)\|_{L^2(\Omega)} + \|\Pi_R(y^h-y_h)\|_{L^2(\Omega)}. 
  \end{align}
  The first term is estimated in
  \eqref{est:regularizationerrornonconvex}.  For the second term we
  first notice that $y^h-y_h\in H^1_0(\Omega)$ satisfies the Galerkin
  orthogonality
  \begin{align}\label{eq:y^h-y_hGalerkin}
    (\nabla (y^h-y_h),\nabla v_h)_\Omega =0\quad\forall v_h\in
    Y_{0h},
  \end{align}
  see \eqref{eq:regsol} and \eqref{eq:serge20/06:6}.
  With that, we estimate $\|\Pi_R(y^h-y_h)\|_{L^2(\Omega)}$ by a
  similar arguments as $\|\Pi_R(y-y^h)\|_{L^2(\Omega)}$ in the proof
  of Lemma \ref{lem:regularizationerrornonconvex}. Recall from
  \eqref{eq:sergepbDir} and \eqref{eq:serge2} that $v_z\in
  H^2(\Omega)\cap H^1_0(\Omega)$ is the weak solution of $\Delta v_z=z\in
  R$. It can be approximated by the Lagrange interpolant $I_hv_z$
  satisfying
  \[
    \|\nabla(v_z-I_h v_z)\|_{L^2(\Omega)}\le ch \|v_z\|_{H^2(\Omega)} 
    \le ch \|z\|_{L^2(\Omega)}.
  \]
  We get 
  \begin{align}
    \|\Pi_R(y^h-y_h)\|_{L^2(\Omega)}&=\sup_{z\in R, z\ne 0}
    \frac{(y^h-y_h, z)_\Omega}{\|z\|_{L^2(\Omega)}} =
    \sup_{z\in R, z\ne 0} \frac{(\nabla(y^h-y_h), 
    \nabla v_z)_\Omega}{\|z\|_{L^2(\Omega)}} \nonumber \\ &=
    \sup_{z\in R, z\ne 0} \frac{(\nabla(y^h-y_h), 
    \nabla (v_z-I_h v_z))_\Omega}{\|z\|_{L^2(\Omega)}} \nonumber  \\&\le 
    ch \|\nabla(y^h-y_h)\|_{L^2(\Omega)}.
    \label{eq:thomas0207}
  \end{align} 

  In order to bound $\|\nabla(y^h-y_h)\|_{L^2(\Omega)}$ by the data we
  consider the lifting $\tilde B_hu^h\in Y_{*h}$ defined by
  \eqref{def:Rh}.  The next steps are simpler than in Section
    \ref{sec:graded} since we have quasi-uniform meshes and obtain a
    sharp estimate also by using an inverse inequality below.
  The homogenized solution $y_0^h=y^h-\tilde B_hu^h\in H^1_0(\Omega)$ satisfies
  \begin{align*}
    (\nabla y_0^h,\nabla v)_\Omega = (f,v)_\Omega - 
    (\nabla (\tilde B_hu^h),\nabla v)_\Omega \quad\forall v\in H^1_0(\Omega).
  \end{align*}
  By taking $v=y_0^h$ we see that
  \begin{align*}
    \|\nabla y_0^h\|^2_{L^2(\Omega)} \le \|f\|_{H^{-1}(\Omega)}\|y_0^h\|_{H^1(\Omega)}
    +\|\nabla (\tilde B_hu^h)\|_{L^2(\Omega)} \|\nabla y_0^h\|_{L^2(\Omega)}.
  \end{align*}
  Using the Poincar\'e inequality we obtain
  \begin{align}\label{eq:serge7}
    \|\nabla y_0^h\|_{L^2(\Omega)} \le c\|f\|_{H^{-1}(\Omega)}+
    \|\nabla (\tilde B_hu^h)\|_{L^2(\Omega)},
  \end{align}
  and with the C\'ea lemma
  \begin{align*}
    \|\nabla(y^h-y_h)\|_{L^2(\Omega)} &\le 
    \|\nabla(y^h-\tilde B_hu^h)\|_{L^2(\Omega)} = 
    \|\nabla y_0^h\|_{L^2(\Omega)} \\ &\le c\|f\|_{H^{-1}(\Omega)}+
    \|\nabla (\tilde B_hu^h)\|_{L^2(\Omega)}.
  \end{align*}

  The remaining term $\|\nabla (\tilde B_hu^h)\|_{L^2(\Omega)}$
  is estimated by using the inverse inequality
  \begin{align*}
    \|\nabla (\tilde B_hu^h)\|_{L^2(T)}\le ch^{-1/2} \|u^h\|_{L^2(E)}.
  \end{align*}
  for $E\subset T\cap\Gamma$, $T\in\mathcal{T}_h$, which can
  be proved by standard scaling arguments, to get
  \begin{align}\label{eq:serge3}
    \|\nabla (\tilde B_hu^h)\|_{L^2(\Omega)}\le ch^{-1/2} \|u^h\|_{L^2(\Gamma)}.
  \end{align}
  Hence we proved
  \begin{align*}
    \|\nabla (y^h-y_h)\|_{L^2(\Omega)}  &\le
    c\|f\|_{H^{-1}(\Omega)} + ch^{-1/2} \|u^h\|_{L^2(\Gamma)}.
  \end{align*}
  With \eqref{thomas:triangleinequality0207}, \eqref{est:regularizationerrornonconvex},
  \eqref{eq:thomas0207}, the previous inequality, and \eqref{eq:uhstable} we finish the proof.
\end{proof}

With \eqref{eq:serge20/06:2} we can immediately conclude the following result.

\begin{corollary}\label{cor:serge}
 Let $y_h\in Y_{*h}$ be the
  solution of \eqref {eq:serge20/06:6repeat}, then the discretization error
  estimate
  \begin{align*}
    \|y-(\Pi_R y_h+\alpha(y) \Sd)\|_{L^2(\Omega)}\le Ch^{1/2}
    \left(h^{1/2}\|f\|_{H^{-1}(\Omega)}+ \|u\|_{L^2(\Gamma)}\right)
  \end{align*}
  holds, reminding that $p_s$ and $\alpha(y)$ are given by \eqref{def:ps} and \eqref{eq:serge20/06:7}, respectively.
\end{corollary}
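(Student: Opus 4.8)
The plan is to observe that, once the exact orthogonal decomposition \eqref{eq:serge20/06:2} is inserted, the quantity to be bounded reduces to the regular-part error already controlled in Lemma~\ref{lem:3.1}. First I would write $y=\Pi_R y+\alpha(y)\,\Sd$ and subtract the proposed approximant $\Pi_R y_h+\alpha(y)\,\Sd$. Because the very same exact coefficient $\alpha(y)$ from \eqref{eq:serge20/06:7} appears in both terms, the singular contributions $\alpha(y)\,\Sd$ cancel identically, leaving
\[
  y-(\Pi_R y_h+\alpha(y)\,\Sd)=\Pi_R y-\Pi_R y_h=\Pi_R(y-y_h),
\]
where the last equality is just the linearity of the orthogonal projection $\Pi_R$.

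With this identity in hand the conclusion is immediate: I would invoke Lemma~\ref{lem:3.1} directly, which already yields
\[
  \|\Pi_R(y-y_h)\|_{L^2(\Omega)}\le C h^{1/2}\left(h^{1/2}\|f\|_{H^{-1}(\Omega)}+\|u\|_{L^2(\Gamma)}\right),
\]
and this is precisely the asserted estimate.

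Since no genuine analytic difficulty arises, the only point needing care --- and hence the nearest thing to an obstacle --- is the bookkeeping that makes the cancellation exact: one must keep the \emph{exact} coefficient $\alpha(y)$ given in closed form by \eqref{eq:serge20/06:7}, rather than any discretized surrogate, so that the $\Span\{\Sd\}$-components of the true solution and of the approximant coincide and contribute no error at all. The favourable order $h^{1/2}$ even for non-convex $\Omega$ then rests entirely on the fact, established in Lemma~\ref{lem:3.1}, that the regular part $\Pi_R(y-y_h)$ is insensitive to the concave corner, while the singular part is captured exactly by construction.
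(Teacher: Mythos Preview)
Your argument is correct and matches the paper's own reasoning: the corollary is stated as an immediate consequence of the decomposition \eqref{eq:serge20/06:2} together with Lemma~\ref{lem:3.1}, and your cancellation of the exact $\alpha(y)\,\Sd$ term followed by an appeal to that lemma is precisely what is intended.
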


Hence the positive result is that $\Pi_R y_h+\alpha(y)\Sd$ is a better
approximation of $y$ than $y_h$. The problem is that $\Sd$ and $\Fd$
are used explicitly, and in practice they are not known. A remedy of
this drawback is the aim of the next section.

\subsection{\label{sec:scm}Approximate singular functions}

Following \cite{ciarletjr:03}, we approximate $p_s$ from \eqref{def:ps} by 
\begin{align}\label{eq:psh}
  \begin{split}
    p_s^h &= p_h^* - r_h + r^{-\lambda}\sin(\lambda\theta), \quad
    r_h = \tilde B_h \left( r^{-\lambda}\sin(\lambda\theta)\right), \\
    p_h^*&\in Y_{0h}:\quad (\nabla p_h^*,\nabla v_h)_\Omega =
    (\nabla r_h,\nabla v_h)_\Omega \quad\forall v_h\in Y_{0h},
  \end{split}
\end{align}
with $\tilde B_h$ from \eqref{def:Rh}. 
The function $\Fd$ from \eqref{def:phis} admits the splitting
\begin{align}\label{eq:serge20/06:11}
  \Fd=\tilde \phi+\beta r^\lambda \sin(\lambda \theta),
\end{align}
with $\tilde \phi\in H^2(\Omega)$ and
$\beta=\pi^{-1}\|\Sd\|^2_{L^2(\Omega)}$, see again
\cite{ciarletjr:03}.  It is approximated by
\begin{align}\label{def:phish}
  \begin{split}
    \phi_s^h &= \phi_h^*-\beta_hs_h+\beta_h r^\lambda\sin(\lambda\theta), \quad
    s_h = \tilde B_h \left( r^\lambda\sin(\lambda\theta)\right), \quad
    \beta_h=\frac1\pi\|\Sd^h\|^2_{L^2(\Omega)}, \\
    \phi_h^*&\in Y_{0h}:\quad (\nabla \phi_h^*,\nabla v_h)_\Omega =
    (p_s^h,v_h)_\Omega + 
    \beta_h (\nabla s_h,\nabla v_h)_\Omega \quad\forall v_h\in Y_{0h},
  \end{split}
\end{align}
that means, $\tilde\phi$ is approximated by
$\tilde\phi_h=\phi_h^*-\beta_hs_h\in Y_h$.
The approximation errors are bounded by
\begin{align}
  \label{eq:serge20/06:10}
  \|\Sd-\Sd^h\|_{L^2(\Omega)}&\le ch^{2\lambda-\epsilon}\le ch, \\
  \label{eq:serge20/06:12}
  |\beta-\beta_h|&\le ch^{2\lambda-\varepsilon}\le ch, \\
  \label{eq:serge20/06:13}
  \|\Fd-\Fd^h\|_{1,\Omega}&\le ch,
\end{align}
see \cite[Lemmas 3.1--3.3]{ciarletjr:03}, where
\eqref{eq:serge20/06:12} and \eqref{eq:serge20/06:13} imply
\begin{align}\label{eq:serge20/06:13b}
\|\tilde \phi -\tilde \phi_h\|_{1,\Omega}\le ch.
\end{align}

At the end of Section \ref{sec:3} we saw that $\Pi_R y_h+\alpha(y)\Sd$
is a better approximation of $y$ than $y_h$. Since this function is
not computable we approximate it by 
\begin{align}\label{eq:serge20/06:14}
  z_h=\Pi_R^h\, y_h+\alpha_h \Sd^h,
\end{align}
with
\begin{align}\label{def:PiRhyh}
  \Pi_R^h\, y_h=y_h-\gamma_h
  \Sd^h, \quad \gamma_h=\frac{(y_h, \Sd^h)_\Omega}{\|\Sd^h\|^2_{L^2(\Omega)}}
\end{align}
and a suitable approximation $\alpha_h$ of 
\[
  \alpha(y) =\frac{-(u,\partial_n \Fd)_\Gamma+(f,\Fd)_\Omega}%
  {(\Sd, \Sd)_\Omega}
\]
from \eqref{eq:serge20/06:7}.  To this end we write the problematic
term by using \eqref{eq:serge20/06:11} as
\[
  (u,\partial_n \Fd)_\Gamma=(u,\partial_n \tilde \phi)_\Gamma+
\beta (u,\partial_n (r^\lambda \sin(\lambda \theta)))_\Gamma.
\]
and replace the term $(u,\partial_n \tilde \phi)_\Gamma$ by
$(u^h,\partial_n \tilde \phi)_\Gamma$. Since $\tilde \phi$
belongs to $H^2(\Omega)$ and $u^h$ is the trace of $\tilde B_hu^h$, we get by
using the Green formula
\begin{align}\nonumber
  (u^h,\partial_n \tilde \phi)_\Gamma&=
  (\tilde B_h u^h,\Delta \tilde \phi)_\Omega +
  (\nabla  \tilde B_h u^h,\nabla \tilde \phi)_\Omega \\ \label{eq:thomas0307a} &=
  - (\tilde B_h u^h,\Sd)_\Omega +
  (\nabla  \tilde B_h u^h, \nabla \tilde \phi)_\Omega
\end{align}
as $\Delta \tilde \phi=\Delta \Fd=-\Sd$.
With all these notations and results, we define
\begin{align}\label{eq:serge20/06:15}
  \alpha_h = \frac{
    (\tilde B_h u^h,\Sd^h)_\Omega -
    (\nabla  \tilde B_h u^h, \nabla \tilde \phi_h)_\Omega
    - \beta_h(u,\partial_n (r^\lambda \sin(\lambda \theta)))_\Gamma
    + (f,\Fd^h)_\Omega
  }{(\Sd^h,\Sd^h)_\Omega^2}.
\end{align}
Note that $\alpha_h$ can be computed explicitly and therefore $z_h$ as well.

Let us estimate the approximation errors made.
\begin{lemma}\label{lem:4.1}
Let $y_h\in Y_{*h}$ be the
  solution of \eqref {eq:serge20/06:6repeat}. Then the error estimates
  \begin{align}\label{eq:serge20/06:17a}
    \|\Pi_R y_h-\Pi_R^h\, y_h\|_{L^2(\Omega)} &\le
    ch  \left(\|f\|_{H^{-1}(\Omega)}+ \|u\|_{L^2(\Gamma)}\right), \\ 
    \label{eq:serge20/06:20}
    |\alpha(y) -\alpha_h| &\le 
    ch^{1/2}  \left(h^{1/2}\|f\|_{H^{-1}(\Omega)}+ \|u\|_{L^2(\Gamma)}\right)
  \end{align}
  hold.
\end{lemma}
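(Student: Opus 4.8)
The plan is to estimate the two quantities separately, exploiting the orthogonal splitting and the approximation results collected in \eqref{eq:serge20/06:10}--\eqref{eq:serge20/06:13b}.

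For the first estimate \eqref{eq:serge20/06:17a}, I would compare the exact projection $\Pi_R y_h=y_h-\gamma\,\Sd$ (where $\gamma=(y_h,\Sd)_\Omega/\|\Sd\|^2_{L^2(\Omega)}$) with the computable $\Pi_R^h y_h=y_h-\gamma_h\,\Sd^h$ from \eqref{def:PiRhyh}. Then
\[
  \Pi_R y_h-\Pi_R^h y_h = \gamma_h\,\Sd^h-\gamma\,\Sd
  = \gamma_h(\Sd^h-\Sd)+(\gamma_h-\gamma)\Sd,
\]
so the task reduces to bounding $|\gamma|$, $|\gamma_h-\gamma|$, and invoking \eqref{eq:serge20/06:10} for $\|\Sd^h-\Sd\|_{L^2(\Omega)}\le ch$. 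For the coefficient difference I would write $\gamma_h-\gamma$ as a quotient with numerator $(y_h,\Sd^h)_\Omega\|\Sd\|^2-(y_h,\Sd)_\Omega\|\Sd^h\|^2$ and control it using $\|\Sd^h-\Sd\|_{L^2(\Omega)}\le ch$ together with the stability bound $\|y_h\|_{L^2(\Omega)}\le c(\|f\|_{H^{-1}(\Omega)}+\|u\|_{L^2(\Gamma)})$, which follows from \eqref{eq:serge7}, \eqref{eq:serge3}, \eqref{eq:uhstable} and the Poincar\'e inequality. The denominators are bounded below by $\|\Sd\|^2_{L^2(\Omega)}>0$ uniformly in $h$ once $h$ is small, since $\Sd^h\to\Sd$ in $L^2(\Omega)$. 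Collecting these gives the factor $h$ and the data-dependent factor, yielding \eqref{eq:serge20/06:17a}.

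For the harder estimate \eqref{eq:serge20/06:20}, I would compare the exact formula \eqref{eq:serge20/06:7} (rewritten via \eqref{eq:serge20/06:11} and the Green identity \eqref{eq:thomas0307a}, replacing $(u,\partial_n\tilde\phi)_\Gamma$ by $(u^h,\partial_n\tilde\phi)_\Gamma$) against the discrete $\alpha_h$ in \eqref{eq:serge20/06:15}, tracking the error contributions term by term. The differences to estimate are: the denominators $(\Sd,\Sd)_\Omega$ versus $(\Sd^h,\Sd^h)_\Omega$, controlled by \eqref{eq:serge20/06:10}; the replacement of $(u,\partial_n\tilde\phi)_\Gamma$ by $(u^h,\partial_n\tilde\phi)_\Gamma$, which produces $(u-u^h,\partial_n\tilde\phi)_\Gamma$ and is bounded using the duality $\|u-u^h\|_{H^{-1/2}(\Gamma)}\le ch^{1/2}\|u\|_{L^2(\Gamma)}$ from \eqref{eq:carstensen} together with $\partial_n\tilde\phi\in H^{1/2}(\Gamma)$ since $\tilde\phi\in H^2(\Omega)$; the substitution of $(\Sd,\Fd$-data$)$ quantities by their discrete counterparts, controlled by \eqref{eq:serge20/06:12}, \eqref{eq:serge20/06:13}, and \eqref{eq:serge20/06:13b}; and finally $(f,\Fd-\Fd^h)_\Omega$, bounded by $\|f\|_{H^{-1}(\Omega)}\|\Fd-\Fd^h\|_{1,\Omega}\le ch\|f\|_{H^{-1}(\Omega)}$ via \eqref{eq:serge20/06:13}.

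The main obstacle I anticipate is the term carrying $(u-u^h,\partial_n\tilde\phi)_\Gamma$: this is the only contribution that produces the bottleneck rate $h^{1/2}$ (multiplying $\|u\|_{L^2(\Gamma)}$) rather than $h$, so it dictates the overall order in \eqref{eq:serge20/06:20} and must be handled with the sharp $H^{-1/2}(\Gamma)$ estimate \eqref{eq:carstensen} rather than a cruder $L^2(\Gamma)$ bound. Care is also needed in the quotient estimates: each numerator--denominator difference must be expanded so that the small factors ($\|\Sd-\Sd^h\|$, $|\beta-\beta_h|$, $\|\tilde\phi-\tilde\phi_h\|_{1,\Omega}$, $\|u-u^h\|_{H^{-1/2}(\Gamma)}$) appear linearly, after which uniform lower bounds on $\|\Sd^h\|^2_{L^2(\Omega)}$ and the data-stability of $y_h$, $\tilde B_hu^h$ close the estimate; all remaining manipulations are routine applications of Cauchy--Schwarz and the triangle inequality.
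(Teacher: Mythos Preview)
Your overall strategy mirrors the paper's proof almost exactly: the same splitting $\Pi_R y_h-\Pi_R^h y_h=\gamma_h\Sd^h-\gamma\Sd$, the same control of $|\gamma-\gamma_h|$ via \eqref{eq:serge20/06:10}, and the same term-by-term comparison of $\alpha(y)$ and $\alpha_h$ after rewriting $(u,\partial_n\Fd)_\Gamma$ through \eqref{eq:serge20/06:11} and \eqref{eq:thomas0307a}, with \eqref{eq:carstensen} producing the critical $h^{1/2}$ factor. Your remark about tracking the denominator difference $(\Sd,\Sd)_\Omega$ versus $(\Sd^h,\Sd^h)_\Omega$ is in fact more careful than the paper, which silently writes $\alpha(y)-\alpha_h$ with the single prefactor $\|\Sd\|_{L^2(\Omega)}^{-2}$.

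There is, however, one slip in your justification of the uniform bound $\|y_h\|_{L^2(\Omega)}\le c(\|f\|_{H^{-1}(\Omega)}+\|u\|_{L^2(\Gamma)})$. The chain you cite, \eqref{eq:serge7}--\eqref{eq:serge3}--\eqref{eq:uhstable} together with Poincar\'e, only yields
\[
  \|\nabla y_0^h\|_{L^2(\Omega)}\le c\|f\|_{H^{-1}(\Omega)}+ch^{-1/2}\|u\|_{L^2(\Gamma)},
\]
because \eqref{eq:serge3} carries the factor $h^{-1/2}$; Poincar\'e does not remove it. Feeding this into $|\gamma-\gamma_h|\le ch\|y_h\|_{L^2(\Omega)}$ would degrade \eqref{eq:serge20/06:17a} from order $h$ to order $h^{1/2}$. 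The paper obtains the $h$-independent bound differently: it combines the a~priori estimate $\|y\|_{L^2(\Omega)}\le c(\|f\|_{H^{-1}(\Omega)}+\|u\|_{L^2(\Gamma)})$ for the very weak solution with the finite element error estimate \eqref{jonny:fe_error}, giving
\[
  \|y_h\|_{L^2(\Omega)}\le\|y\|_{L^2(\Omega)}+\|y-y_h\|_{L^2(\Omega)}
  \le c\big(\|f\|_{H^{-1}(\Omega)}+\|u\|_{L^2(\Gamma)}\big).
\]
With this correction your argument goes through and matches the paper's.
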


\begin{proof}
  With the definitions of $\Pi_R$ and $\Pi_R^h$, with
  $\gamma:=(y_h,\Sd)_\Omega/\|\Sd\|_{L^2(\Omega)}^2$, and by
  using the triangle inequality we have
  \begin{align*}
    \|\Pi_R y_h-\Pi_R^h\, y_h\|_{L^2(\Omega)} &=
    \|\gamma p_s-\gamma_h p_s^h\|_{L^2(\Omega)} \\ &\le
    |\gamma-\gamma_h|\,\|p_s^h\|_{L^2(\Omega)} + 
    |\gamma|\,\|p_s-p_s^h\|_{L^2(\Omega)}
  \end{align*}
  We write
  \begin{align*}
    \gamma-\gamma_h &= \frac{(y_h, \Sd)_\Omega}{\|\Sd\|^2_{L^2(\Omega)} } -
    \frac{(y_h, \Sd^h)_\Omega}{\|\Sd^h\|^2_{L^2(\Omega)} } \\
    &=\frac{(y_h, \Sd-\Sd^h)_\Omega}{\|\Sd\|^2_{L^2(\Omega)} }+ (y_h, \Sd^h)_\Omega
    \left(\frac{1}{\|\Sd\|^2_{L^2(\Omega)}
    }-\frac{1}{\|\Sd^h\|^2_{L^2(\Omega)} }\right)\\
    &=\frac{(y_h, \Sd-\Sd^h)_\Omega}{\|\Sd\|^2_{L^2(\Omega)} }+ (y_h, \Sd^h)_\Omega
    \frac{(p_s^h+p_s,p_s^h-p_s)_{\Omega}}{\|\Sd\|^2_{L^2(\Omega)}\|\Sd^h\|^2_{L^2(\Omega)}},
  \end{align*}
  and by the Cauchy-Schwarz inequality and \eqref{eq:serge20/06:10} we get
  \[
    |\gamma-\gamma_h|\le ch \|y_h\|_{L^2(\Omega)}.
  \]
  We have used that $\|p_s\|_{L^2(\Omega)}$ and $\|p_s^h\|_{L^2(\Omega)}$ can be treated as constants due to the definition of $p_s$ and due to \eqref{eq:serge20/06:10}.
  We conclude with $|\gamma|\le
  c \|y_h\|_{L^2(\Omega)}$, and \eqref{eq:serge20/06:10} that
  \begin{align}
    \|\Pi_R y_h-\Pi_R^h\, y_h\|_{L^2(\Omega)} &\le
   ch \|y_h\|_{L^2(\Omega)}.\label{jonny:Pi_R}
  \end{align}
  In view of the finite element error estimate \eqref{jonny:fe_error} and the standard a priori estimate for the very weak solution,
  \begin{equation*}
    \|y\|_{L^2(\Omega)}\leq c\left(\|f\|_{H^{-1}(\Omega)}+\|u\|_{L^2(\Gamma)}\right),
  \end{equation*}
  see Lemma 2.3 of \cite{ApelNicaisePfefferer2014a}, we have
  \[
    \|y_h\|_{L^2(\Omega)}\leq \|y\|_{L^2(\Omega)}+\|y-y_h\|_{L^2(\Omega)}\leq c\left(\|f\|_{H^{-1}(\Omega)}+\|u\|_{L^2(\Gamma)}\right).
  \]
  This estimate together with \eqref{jonny:Pi_R} proves \eqref{eq:serge20/06:17a}.
\pagebreak[3]

  The proof of the estimate \eqref{eq:serge20/06:20} is based on
  writing the problematic term in the definition of $\alpha(y)$
  without approximation as
  \begin{align*}
    (u,\partial_n \Fd)_\Gamma &=
    (u,\partial_n \tilde \phi)_\Gamma+
    \beta (u,\partial_n (r^\lambda \sin(\lambda \theta)))_\Gamma \\ &=
    (u-u^h,\partial_n \tilde \phi)_\Gamma+
    (u^h,\partial_n \tilde \phi)_\Gamma+
    \beta (u,\partial_n (r^\lambda \sin(\lambda \theta)))_\Gamma \\ &=
    (u-u^h,\partial_n \tilde \phi)_\Gamma
      - (\tilde B_h u^h,\Sd)_\Omega +
  (\nabla  \tilde B_h u^h, \nabla \tilde \phi)_\Omega+ 
    \beta (u,\partial_n (r^\lambda \sin(\lambda \theta)))_\Gamma 
  \end{align*}
  where we used \eqref{eq:thomas0307a} in the last step. Consequently,
  we showed that
  \begin{align*}
    \alpha(y)-\alpha_h& =\frac{1}{\|\Sd\|_{L^2(\Omega)}^2} \Big(
    -(u-u^h,\partial_n \tilde \phi)_\Gamma +
     (\tilde B_h u^h,\Sd-\Sd^h)_\Omega  -
    (\nabla  \tilde B_h u^h, \nabla(\tilde\phi-\tilde\phi_h))_\Omega \\ &\qquad
     -   (\beta-\beta_h)\, 
    (u,\partial_n (r^\lambda \sin(\lambda \theta)))_\Gamma + 
    (f,\Fd-\Fd^h)_\Omega\Big).
  \end{align*}
  To prove \eqref{eq:serge20/06:20}, in view of
  \eqref{eq:serge20/06:10}, \eqref{eq:serge20/06:12}, and
  \eqref{eq:serge20/06:13} it remains to show that
  \begin{align*}
    \left|(u-u^h,\partial_n \tilde \phi)_\Gamma\right| 
    &\le ch^{1/2}\|u\|_{L^2(\Gamma)},\\
    \left|(\tilde B_h u^h, \Sd-\Sd^h)_\Omega\right| 
    &\le ch^{1/2}\|u\|_{L^2(\Gamma)}, \\
    \left|(\nabla \tilde B_h u^h,\nabla(\tilde \phi-\tilde \phi^h))_\Omega\right|
    &\le ch^{1/2}\|u\|_{L^2(\Gamma)}.
  \end{align*}
  The first estimate follows from the estimate \eqref{eq:carstensen}
  and the fact that $\tilde \phi$ belongs to $H^2(\Omega)$.  The
  second one follows from the Cauchy-Schwarz inequality and the
  estimates \eqref{eq:serge3} and \eqref{eq:serge20/06:10}. Similarly,
  the third estimate follows from the Cauchy-Schwarz inequality and the
  estimates \eqref{eq:serge3} and \eqref{eq:serge20/06:13b}.
\end{proof}

\begin{corollary}
  Let $\Omega$ be a non-convex domain and let $y_h\in Y_{*h}$ be the
  solution of \eqref {eq:serge20/06:6repeat} and let $z_h$ be derived
  by \eqref{eq:serge20/06:14}, \eqref{def:PiRhyh}, and
  \eqref{eq:serge20/06:15}, then the discretization error estimate
  \begin{align*}
    \|y-z_h\|_{L^2(\Omega)}\le Ch^{1/2}  
    \left(h^{1/2}\|f\|_{H^{-1}(\Omega)}+ \|u\|_{L^2(\Gamma)}\right)
  \end{align*}
  holds.
\end{corollary}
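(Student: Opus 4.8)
The plan is to reduce the claim to results already in hand---namely Corollary~\ref{cor:serge} and the two bounds \eqref{eq:serge20/06:17a} and \eqref{eq:serge20/06:20} of Lemma~\ref{lem:4.1}---by inserting the (non-computable but already-analyzed) intermediate quantity $\Pi_R y_h + \alpha(y)\,p_s$. Using the orthogonal decomposition $y = \Pi_R y + \alpha(y)\,p_s$ from \eqref{eq:serge20/06:2} and the definition $z_h = \Pi_R^h\,y_h + \alpha_h\,p_s^h$ from \eqref{eq:serge20/06:14}, I would write
\[
  y - z_h = \bigl(y - (\Pi_R y_h + \alpha(y)\,p_s)\bigr) + \bigl(\Pi_R y_h - \Pi_R^h y_h\bigr) + \bigl(\alpha(y)\,p_s - \alpha_h\,p_s^h\bigr)
\]
and apply the triangle inequality. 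The first bracket is exactly the quantity bounded in Corollary~\ref{cor:serge}, which already contributes the target estimate $Ch^{1/2}(h^{1/2}\|f\|_{H^{-1}(\Omega)} + \|u\|_{L^2(\Gamma)})$.

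For the second bracket I would invoke \eqref{eq:serge20/06:17a} directly, giving $\|\Pi_R y_h - \Pi_R^h y_h\|_{L^2(\Omega)} \le ch(\|f\|_{H^{-1}(\Omega)} + \|u\|_{L^2(\Gamma)})$, which is subsumed by the target bound since $h \le c\,h^{1/2}$ over a bounded range of mesh sizes. For the third bracket I would split $\alpha(y)\,p_s - \alpha_h\,p_s^h = (\alpha(y) - \alpha_h)\,p_s + \alpha_h\,(p_s - p_s^h)$. The first summand is controlled by \eqref{eq:serge20/06:20} together with the fact that $\|p_s\|_{L^2(\Omega)}$ is a fixed constant (by the definition \eqref{def:ps} of $p_s$), yielding $ch^{1/2}(h^{1/2}\|f\|_{H^{-1}(\Omega)} + \|u\|_{L^2(\Gamma)})$. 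The second summand needs the uniform bound $|\alpha_h| \le c(\|f\|_{H^{-1}(\Omega)} + \|u\|_{L^2(\Gamma)})$, which I would get by bounding $|\alpha(y)|$ through the representation \eqref{eq:serge20/06:7} (treating $\partial_n \phi_s \in L^2(\Gamma)$ and $\phi_s \in H^1(\Omega)$ as fixed data) and then adding $|\alpha(y) - \alpha_h|$ from \eqref{eq:serge20/06:20}; combined with $\|p_s - p_s^h\|_{L^2(\Omega)} \le ch$ from \eqref{eq:serge20/06:10}, this gives $ch(\|f\|_{H^{-1}(\Omega)} + \|u\|_{L^2(\Gamma)})$, again absorbed into the target.

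Collecting the three contributions finishes the proof. I do not expect a serious obstacle here, since all the analytically nontrivial work has been done in Lemma~\ref{lem:4.1} and Corollary~\ref{cor:serge}; the present statement is essentially a recombination. The only points deserving genuine care are the uniform boundedness of $\|p_s\|_{L^2(\Omega)}$ and of $|\alpha_h|$ (needed so that the product $\alpha(y)\,p_s - \alpha_h\,p_s^h$ splits cleanly into a ``coefficient error'' and a ``function error''), and the bookkeeping of the $h$-powers, where one repeatedly uses $h \le c\,h^{1/2}$ to recognize the $O(h)$ terms as higher order relative to the leading $O(h^{1/2})$ term.
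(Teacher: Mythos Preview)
Your proposal is correct and follows essentially the same route as the paper's proof: the same four-term splitting (first term via Corollary~\ref{cor:serge}, which is just a restatement of Lemma~\ref{lem:3.1}; second and third via Lemma~\ref{lem:4.1}; fourth via \eqref{eq:serge20/06:10} after bounding $|\alpha_h|$ by $|\alpha(y)|+|\alpha(y)-\alpha_h|$). The only cosmetic difference is that the paper cites $\phi_s\in H^{3/2+\epsilon}(\Omega)$ to bound $|\alpha(y)|$, while you use $\partial_n\phi_s\in L^2(\Gamma)$ and $\phi_s\in H^1(\Omega)$---both justifications work.
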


\begin{proof}
  The main ingredients of the proof were already derived. Indeed, it is
  \begin{align*}
    \|y-z_h\|_{L^2(\Omega)}&=
    \|\Pi_Ry+\alpha(y) \Sd-\Pi_R^h\, y_h-\alpha_h \Sd^h \|_{L^2(\Omega)} \\ &\le
    \|\Pi_Ry-\Pi_Ry_h\|_{L^2(\Omega)} +
    \|\Pi_Ry_h-\Pi_R^h\, y_h\|_{L^2(\Omega)} + \\ &\qquad
    |\alpha(y)-\alpha_h| \,\| \Sd \|_{L^2(\Omega)} +
    |\alpha_h|\, \| \Sd- \Sd^h \|_{L^2(\Omega)}.
  \end{align*}
  
  The first three terms can be estimated by using Lemmas \ref{lem:3.1}
  and \ref{lem:4.1}. So it remains to treat the fourth term.
  To bound $|\alpha_h|$ we use the triangle inequality
  \[
    |\alpha_h|\leq |\alpha_h-\alpha(y)|+|\alpha(y)|.
  \]
  For the first term we use \eqref{eq:serge20/06:20}, while for the
  second term we use \eqref{eq:serge20/06:7} reminding that $\phi_s$
  belongs to $H^{3/2+\epsilon}(\Omega)$ with some $\epsilon>0$. 
  Altogether we have 
  \[
    |\alpha_h|\leq C
    \left(\|f\|_{H^{-1}(\Omega)}+ \|u\|_{L^2(\Gamma)}\right)
  \]
  and conclude by using \eqref{eq:serge20/06:10}.
\end{proof}

\subsection{The method in form of an algorithm}

Before we describe the numerical experiments, let us summarize the
algorithm.
\begin{enumerate}
\item Compute the finite element solution
  \begin{align*}
    y_h\in Y_{*h}:\quad (\nabla y_h,\nabla v_h)_\Omega =
    (f,v_h)_\Omega\quad\forall v_h\in Y_{0h}
  \end{align*}
  where $Y_{*h}=\{v_h\in Y_h: v_h|_\Gamma=u^h\}$, compare
  \eqref{eq:serge20/06:6}, with $u^h\in Y_h^\partial$ being an
  approximation of the boundary datum $u$ satisfying
  \eqref{eq:uhstable} and \eqref{eq:carstensen}. 
\item Compute the approximate singular functions:
  \begin{align*}
    r_h &= \tilde B_h \left( r^{-\lambda}\sin(\lambda\theta)\right), \\
    p_h^*&\in Y_{0h}:\quad (\nabla p_h^*,\nabla v_h)_\Omega =
    (\nabla r_h,\nabla v_h)_\Omega \quad\forall v_h\in Y_{0h}, \\
    \tilde p_h&= p_h^* - r_h, \\
    \beta_h&=\frac1\pi
    \|\tilde p_h + r^{-\lambda}\sin(\lambda\theta)\|^2_{L^2(\Omega)}, \\
    s_h &= \tilde B_h \left( r^\lambda\sin(\lambda\theta)\right), \\
    \phi_h^*&\in Y_{0h}:\quad (\nabla \phi_h^*,\nabla v_h)_\Omega =
    ( \tilde p_h + r^{-\lambda}\sin(\lambda\theta),v_h)_\Omega +  
    \beta_h (\nabla s_h,\nabla v_h)_\Omega \quad\forall v_h\in Y_{0h}, \\
    \tilde\phi_h &= \phi_h^*-\beta_hs_h, 
  \end{align*}
  compare \eqref{eq:psh} and \eqref{def:phish}.
\item Compute 
  \begin{align*}
    \gamma_h &= \frac{(y_h, \Sd^h)_\Omega}{(\Sd^h,\Sd^h)_\Omega} 
    \quad\text{with }p_s^h=\tilde p_h+r^{-\lambda}\sin(\lambda\theta),\\
    \alpha_h &=\frac{
    (\tilde B_h u^h,\Sd^h)_\Omega -
    (\nabla  \tilde B_h u^h, \nabla \tilde \phi_h)_\Omega
    - \beta_h(u,\partial_n (r^\lambda \sin(\lambda \theta)))_\Gamma
    + (f,\Fd^h)_\Omega
  }{(\Sd^h,\Sd^h)_\Omega^2}, \\
    \delta_h&=\alpha_h-\gamma_h, \\
    \tilde  z_h&=y_h+\delta_h\tilde p_h,
  \end{align*}
  compare \eqref{def:PiRhyh} and \eqref{eq:serge20/06:15}. According
  to \eqref{eq:serge20/06:14}, the numerical solution is
  \begin{align*}
    z_h&=\tilde z_h + \delta_hr^{-\lambda}\sin(\lambda\theta).
  \end{align*}
\end{enumerate}\pagebreak[3]
  Note that all integrals with $r^\lambda$ and $r^{-\lambda}$ must be
  computed with care.

\section{\label{sec:experiments}Numerical experiment}

This section is devoted to the numerical verification of our
theoretical results.  For that purpose we present an example with known
solution. Furthermore, to examine the influence of the corner
singularities, we consider several polygonal domain $\Omega_\omega$
depending on an interior angle $\omega\in(0,2\pi)$; we present here the results for $\omega=270^\circ$ and $\omega=355^\circ$. The computational
domains are defined by
\begin{equation}\label{eq:compdomain}
  \Omega_\omega:=(-1,1)^2\cap
  \{x\in \R^2: (r(x),\theta(x))\in(0,\sqrt{2}]\times[0,\omega]\},
\end{equation}
where $r$ and $\theta$ stand for the polar coordinates located at the
origin. The boundary of $\Omega_\omega$ is denoted by $\Gamma_\omega$%
.  
We solve the problem
\begin{align}\label{eq:linexample}
    -\Delta y   &= 0 \quad \text{in }\Omega_\omega, &
    y &= u \quad \text{on }\Gamma_\omega,
\end{align}
numerically by using a standard finite element method with graded
meshes and the proposed dual singular function method with
quasi-uniform meshes. The boundary datum $u$ is chosen to be
\[
  u:=r^{-0.4999}\sin(-0.4999\,\theta)\quad \text{on } \Gamma_\omega.
\]
This function belongs to $L^p(\Gamma)$ for every $p<2.0004$. The exact
solution of our problem is simply
\[
  y=r^{-0.4999}\sin(-0.4999\,\theta),
\]
since $y$ is harmonic.

Quasi-uniform finite element meshes are generated from a coarse
initial mesh by using a newest vertex bisection algorithm. Graded
meshes are generated by marking and bisecting elements until the
grading condition \eqref{eq:gradingcondition} is fulfilled with
suitable constants $c_1$ and $c_2$, see Figure \ref{fig:meshes}.  As a
regularization we have used the $L^2(\Gamma)$-projection. The
discretization errors are calculated by an adaptive quadrature
formula.

\begin{figure}
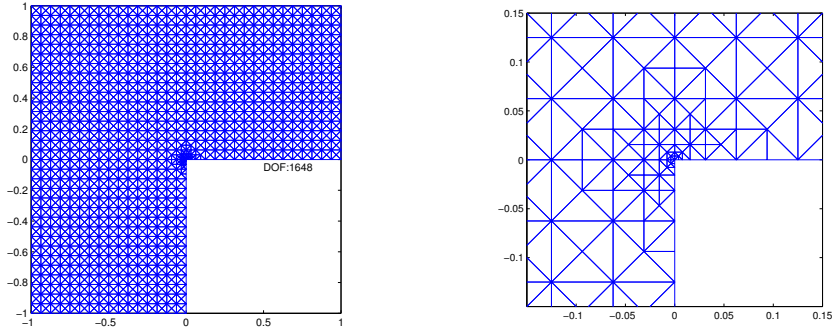

  \hspace*{\fill}\begin{minipage}{0.3\textwidth}
    \includegraphics[width=\textwidth]%
    {JonnyDirichletControl/mesh270mu0p3333_4-crop}
  \end{minipage}
  \hfill
  \begin{minipage}{0.3\textwidth}
    \includegraphics[width=\textwidth]%
    {JonnyDirichletControl/mesh270mu0p3333_4_crop-crop}
  \end{minipage}\hspace*{\fill}
  \caption{\label{fig:meshes}Graded mesh with $\mu=0.3333$, generated
    by newest vertex bisection; left: whole mesh, right: zoom}
\end{figure}

The discretization errors for different mesh sizes and the
corresponding experimental orders of convergence are given in Tables
\ref{tab2a} and \ref{tab2b} for the interior angle $\omega=270^\circ$
and in Tables \ref{tab3a} and \ref{tab3b} for the interior angle
$\omega=355^\circ$.  We see that the numerical results confirm the
expected convergence rate $1/2$ for the dual singular complement
method and the finite element method on sufficently graded meshes.
For $\mu>2\lambda-1$ we obtain a convergence rate of about
$(\lambda-1/2)/\mu$ only which can certainly be proven with an
adaption of the techniques used in Section \ref{sec:graded} but is of
less interest. We show the numerical results here mainly to underline
that the strong grading $\mu<2\lambda-1$ is indeed necessary for
optimal convergence.

\begin{table}\centering\small
    \begin{tabular}{rcccc}
      \toprule
      unknowns & standard & eoc   & DSCM    & eoc   \\
      \midrule
      33          & 0.736    &       & 0.653 &       \\
      113         & 0.645    & 0.215 & 0.587 & 0.154 \\
      417         & 0.568    & 0.193 & 0.423 & 0.472 \\
      1601        & 0.503    & 0.181 & 0.303 & 0.482 \\
      6273        & 0.447    & 0.175 & 0.216 & 0.489 \\
      24833       & 0.397    & 0.171 & 0.154 & 0.493 \\
      98817       & 0.353    & 0.169 & 0.109 & 0.496 \\
      394241      & 0.314    & 0.168 & 0.077 & 0.498 \\
      \midrule
      expected    &          & 0.167 &       & 0.5 \\
      \bottomrule
    \end{tabular}\\[2ex]
    \caption{\label{tab2a}Discretization errors $e_h=y-y_h$ with 
      quasi-uniform mesh (standard) and $e_h=y-z_h$ (DSCM) for $\omega=270^\circ$}
\end{table}

\begin{table}\centering\small
    \begin{tabular}{rccrccrcc}
      \toprule
      \multicolumn{3}{c}{$\mu=0.666$} &
      \multicolumn{3}{c}{$\mu=0.5$}&
      \multicolumn{3}{c}{$\mu=0.333$} \\
      \cmidrule(r){1-3} \cmidrule{4-6} \cmidrule(l){7-9}
      unknowns& error & eoc &unknowns& error & eoc&unknowns& error & eoc \\
      \midrule
      33     & 0.736 &       & 33     & 0.736 &       & 33     & 0.736 &       \\
      113    & 0.645 & 0.215 & 113    & 0.645 & 0.215 & 113    & 0.645 & 0.215 \\
      421    & 0.498 & 0.392 & 424    & 0.505 & 0.369 & 428    & 0.445 & 0.559 \\
      1618   & 0.446 & 0.165 & 1631   & 0.398 & 0.354 & 1648   & 0.312 & 0.524 \\
      6343   & 0.348 & 0.361 & 6381   & 0.315 & 0.344 & 6463   & 0.220 & 0.512 \\
      25111  & 0.314 & 0.153 & 25244  & 0.249 & 0.339 & 25544  & 0.155 & 0.508 \\
      99881  & 0.246 & 0.354 & 100423 & 0.198 & 0.336 & 101563 & 0.110 & 0.504 \\
      398436 & 0.221 & 0.150 & 400553 & 0.157 & 0.335 & 405014 & 0.077 & 0.502 \\
       \midrule
      expected &     & 0.25  &        &       & 0.333 &        &       & 0.5   \\
      \bottomrule
    \end{tabular}
\caption{\label{tab2b}Discretization errors $e_h=y-y_h$ for $\omega=270^\circ$}
\end{table}

\begin{table}\centering\small
    \begin{tabular}{rcccc}
      \toprule
      unknowns & standard & eoc & DSCM & eoc \\
      \midrule
      46      & 1.105  &       & 1.010 &       \\
      159     & 1.069  & 0.053 & 1.021 &       \\
      589     & 1.049  & 0.029 & 0.834 & 0.291 \\
      2265    & 1.036  & 0.018 & 0.590 & 0.500 \\
      8881    & 1.028  & 0.012 & 0.417 & 0.500 \\
      35169   & 1.021  & 0.010 & 0.295 & 0.499 \\
      139969  & 1.015  & 0.008 & 0.209 & 0.497 \\
      558465  & 1.010  & 0.008 & 0.148 & 0.495 \\
      \midrule
      expected&        & 0.007 &       & 0.5 \\
      \bottomrule
    \end{tabular}\\[2ex]
\caption{\label{tab3a}Discretization errors $e_h=y-y_h$ with 
      quasi-uniform mesh (standard) and $e_h=y-z_h$ (DSCM) for $\omega=355^\circ$}
\end{table}

\begin{table}\centering\small
    \begin{tabular}{rccrccrcc}
      \toprule
      \multicolumn{3}{c}{$\mu=0.5$} &
      \multicolumn{3}{c}{$\mu=0.3$}&
      \multicolumn{3}{c}{$\mu=0.014085$} \\
      \cmidrule(r){1-3} \cmidrule{4-6} \cmidrule(l){7-9}
      unknowns& error & eoc &unknowns& error & eoc&unknowns& error & eoc \\
      \midrule
      46     & 1.105 &       & 46     & 1.105 &       & 46    & 1.105 &       \\
      159    & 1.069 & 0.053 & 159    & 1.069 & 0.053 & 159   & 1.069 & 0.047 \\
      597    & 1.039 & 0.044 & 602    & 1.031 & 0.055 & 970   & 0.854 & 0.325 \\
      2301   & 1.023 & 0.023 & 2335   & 1.012 & 0.028 & 4116  & 0.600 & 0.509 \\
      9014   & 1.011 & 0.017 & 9166   & 0.990 & 0.032 & 16154 & 0.424 & 0.502 \\
      35682  & 1.001 & 0.015 & 36197  & 0.975 & 0.022 & 62949 & 0.298 & 0.508 \\
      141986 & 0.991 & 0.015 & 144015 & 0.962 & 0.020 & 247276& 0.210 & 0.505 \\
      566419 & 0.981 & 0.014 & 574296 & 0.942 & 0.030 & 979316& 0.148 & 0.505 \\
      \midrule
      expected &     &0.014  &        &       & 0.023 &       &       & 0.5 \\
      \bottomrule
    \end{tabular}
\caption{\label{tab3b}Discretization errors $e_h=y-z_h$ for $\omega=355^\circ$}
\end{table}

\begin{figure}
  \hspace*{\fill}\includegraphics[width=0.33\textwidth]{JonnyDirichletControl/%
  y270mu0p05_3-crop}\hfill
  \includegraphics[width=0.33\textwidth]{JonnyDirichletControl/%
  yh270mu1p0_3-crop}\hspace*{\fill}

  \hspace*{\fill}exact solution \hfill quasi-uniform mesh \hspace*{\fill}\bigskip

  \hspace*{\fill}\includegraphics[width=0.33\textwidth]{JonnyDirichletControl/%
  yh270mu0p3333_3-crop}\hfill
  \includegraphics[width=0.33\textwidth]{JonnyDirichletControl/%
  zh270mu0p05_3-crop}\hspace*{\fill}
  
  \hspace*{\fill}graded mesh \hfill DSCM \hspace*{\fill}
  \caption{\label{fig:solutions}Visual comparison of the solutions, $\omega=270^\circ$}
\end{figure}

\begin{figure}
  \hspace*{\fill}\includegraphics[width=0.33\textwidth]{JonnyDirichletControl/%
  y355mu0p05_3-crop}\hfill
  \includegraphics[width=0.33\textwidth]{JonnyDirichletControl/%
  yh355mu1p0_3-crop}\hspace*{\fill}

  \hspace*{\fill}exact solution \hfill quasi-uniform mesh \hspace*{\fill}\bigskip

  \hspace*{\fill}\includegraphics[width=0.33\textwidth]{JonnyDirichletControl/%
  yh355mu0p0141_3-crop}\hfill
  \includegraphics[width=0.33\textwidth]{JonnyDirichletControl/%
  zh355mu0p05_3-crop}\hspace*{\fill}

  \hspace*{\fill}graded mesh \hfill DSCM \hspace*{\fill}
  \caption{\label{fig:solutions355}Visual comparison of the solutions, $\omega=355^\circ$}
\end{figure}

Finally, in Figures \ref{fig:solutions} and \ref{fig:solutions355} we
display the exact and some computed solutions for a visual comparison.
There is a pole of type $r^{-0.4999}$ in the boundary data and hence
in the exact solution. The standard finite element solution and the
solution on graded meshes are computed after regularization of the
boundary datum which replaces the infinite value for $r=0$ by a finite
one, which may be big as in the case of $\omega=355^\circ$. One can
also see that the behavior for $r\to0$ can be approximated better with
graded meshes. The solution with the DSCM contains two parts, a the
finite element function on a quasi-uniform mesh and a multiple of the
singular function $r^{-\lambda}\sin(\lambda\theta)$ which has a pole
of type $r^{-2/3}$ for $\omega=270^\circ$ and of type $r^{-180/355}$
for $\omega=355^\circ$. The latter term produces an infinite value for
$r=0$ and has a asymptotic behaviour which is different from the exact
solution. Interesting enough, the $L^2(\Omega)$-error of the DSCM
solution profits from the presence of this term.

Concerning the DSCM,  we emphasize that the quadrature formula
for the numerical evaluation of the integral
\[
  (u,\partial_n(r^\lambda\sin(\lambda\theta)))_\Gamma
\]
has to be adapted in order to get a sufficiently good approximation.
Otherwise, the error due to quadrature dominates the
overall error. In our implementation, we chose for the numerical
integration a graded mesh on the boundary ($h_E\sim hr_E^{1-\mu}$ if
the distance $r_E$ of the boundary edge $E$ satisfies $0<r_E<R$ with
$R$ being the radius of the refinement zone and $\mu$ being the
refinement parameter, and $h_T=h^{1/\mu}$ for $r_E=0$) combined with a
one-point Gauss quadrature rule on each element.  
The choice
$
  \mu\le2\pi/\omega-1
$
seems to be the correct grading to achieve a convergence order
of $1/2$. For the results presented in Tables \ref{tab2a} and \ref{tab3a} we used
$R=0.1$ and $\mu=2\pi/\omega-1$.

\bibliographystyle{abbrv}\bibliography{ApPf}

\begin{thebibliography}{10}

\bibitem{ApelNicaisePfefferer2014b}
T.~Apel, S.~Nicaise, and J.~Pfefferer.
\newblock A dual singular complement method for the numerical solution of the
  {P}oisson equation with ${L}^2$ boundary data in non-convex domains.
\newblock Preprint arXiv:1505.00414 [math.NA], arXiv, 2015.

\bibitem{ApelNicaisePfefferer2014a}
T.~Apel, S.~Nicaise, and J.~Pfefferer.
\newblock Discretization of the {P}oisson equation with non-smooth data and
  emphasis on non-convex domains.
\newblock To appear in Numer. Methods Partial Differential Equations, 2016.

\bibitem{ApelPfeffererRoesch:2012}
T.~Apel, J.~Pfefferer, and A.~R\"osch.
\newblock Finite element error estimates on the boundary with application to
  optimal control.
\newblock {\em Math. Comp.}, 84:33--70, 2015.

\bibitem{Babuska1970}
I.~Babu{\v{s}}ka.
\newblock Error-bounds for finite element method.
\newblock {\em Numerische Mathematik}, 16:322--333, 1970/1971.

\bibitem{Berggren2004}
M.~Berggren.
\newblock {Approximations of very weak solutions to boundary-value problems.}
\newblock {\em SIAM J. Numer. Anal.}, 42(2):860--877, 2004.

\bibitem{Bernardi1989}
C.~Bernardi.
\newblock Optimal finite-element interpolation on curved domains.
\newblock {\em SIAM J. Numer. Anal.}, 26(5):1212--1240, 1989.

\bibitem{BlumDobrowolski1982}
H.~Blum and M.~Dobrowolski.
\newblock On finite element methods for elliptic equations on domains with
  corners.
\newblock {\em Computing}, 28(1):53--63, 1982.

\bibitem{BrambleKing1994}
J.~H. Bramble and J.~T. King.
\newblock A robust finite element method for nonhomogeneous {D}irichlet
  problems in domains with curved boundaries.
\newblock {\em Math. Comp.}, 63(207):1--17, 1994.

\bibitem{Carstensen1999}
C.~Carstensen.
\newblock {Quasi-interpolation and a posteriori error analysis in finite
  element methods.}
\newblock {\em M2AN, Math. Model. Numer. Anal.}, 33(6):1187--1202, 1999.

\bibitem{CasasMateosRaymond2009}
E.~Casas, M.~Mateos, and J.-P. Raymond.
\newblock Penalization of {D}irichlet optimal control problems.
\newblock {\em ESAIM. Control, Optimisation and Calculus of Variations},
  15(4):782--809, 2009.

\bibitem{CasasRaymond2006}
E.~Casas and J.-P. Raymond.
\newblock {Error estimates for the numerical approximation of Dirichlet
  boundary control for semilinear elliptic equations.}
\newblock {\em SIAM J. Control Optim.}, 45(5):1586--1611, 2006.

\bibitem{ciarletjr:03}
P.~Ciarlet, Jr. and J.~He.
\newblock The singular complement method for 2d scalar problems.
\newblock {\em C. R. Math. Acad. Sci. Paris}, 336(4):353--358, 2003.

\bibitem{CiarletJungKaddouriLabrunieZou2005}
P.~Ciarlet, Jr., B.~Jung, S.~Kaddouri, S.~Labrunie, and J.~Zou.
\newblock The {F}ourier singular complement method for the {P}oisson problem.
  {I}. {P}rismatic domains.
\newblock {\em Numer. Math.}, 101(3):423--450, 2005.

\bibitem{CiarletJungKaddouriLabrunieZou2006}
P.~Ciarlet, Jr., B.~Jung, S.~Kaddouri, S.~Labrunie, and J.~Zou.
\newblock The {F}ourier singular complement method for the {P}oisson problem.
  {II}. {A}xisymmetric domains.
\newblock {\em Numer. Math.}, 102(4):583--610, 2006.

\bibitem{Costabel1988}
M.~Costabel.
\newblock Boundary integral operators on {L}ipschitz domains: elementary
  results.
\newblock {\em SIAM J. Math. Anal.}, 19(3):613--626, 1988.

\bibitem{BourlardDaugeLubumaNicaise90a}
M.~Dauge, S.~Nicaise, M.~Bourlard, and J.~M.-S. Lubuma.
\newblock Coefficients des singularit\'es pour des probl\`emes aux limites
  elliptiques sur un domaine \`a points coniques. {I}.\ {R}\'esultats
  g\'en\'eraux pour le probl\`eme de {D}irichlet.
\newblock {\em RAIRO Mod\'el. Math. Anal. Num\'er.}, 24(1):27--52, 1990.

\bibitem{ReyesMeyerVexler2008}
J.~C. de~los Reyes, C.~Meyer, and B.~Vexler.
\newblock Finite element error analysis for state-constrained optimal control
  of the {S}tokes equations.
\newblock {\em Control Cybernet.}, 37(2):251--284, 2008.

\bibitem{DeckelnickGuentherHinze2009}
K.~Deckelnick, A.~G{\"u}nther, and M.~Hinze.
\newblock Finite element approximation of {D}irichlet boundary control for
  elliptic {PDE}s on two- and three-dimensional curved domains.
\newblock {\em SIAM J. Control Optim.}, 48(4):2798--2819, 2009.

\bibitem{Demlow:2010}
A.~Demlow, J.~Guzm\'{a}n, and A.~H. Schatz.
\newblock Local energy estimates for the finite element method on sharply
  varying grids.
\newblock {\em Math. Comp.}, 80(273):1--9, 2011.

\bibitem{FrenchKing1991}
D.~A. French and J.~T. King.
\newblock {Approximation of an elliptic control problem by the finite element
  method.}
\newblock {\em Numer. Funct. Anal. Optimization}, 12(3-4):299--314, 1991.

\bibitem{grisvard:85a}
P.~Grisvard.
\newblock {\em Elliptic problems in nonsmooth domains}, volume~24 of {\em
  Monographs and Studies in Mathematics}.
\newblock Pitman, Boston--London--Melbourne, 1985.

\bibitem{grisvard:92b}
P.~Grisvard.
\newblock {\em Singularities in boundary value problems}, volume~22 of {\em
  Research Notes in Applied Mathematics}.
\newblock Springer, New York, 1992.

\bibitem{LionsMagenes1968}
J.-L. Lions and E.~Magenes.
\newblock {\em {Probl\`emes aux limites non homog\`enes et applications. Vol.
  1, 2.}}
\newblock Travaux et Recherches Math\'ematiques. Dunod, Paris, 1968.

\bibitem{MayRannacherVexler2008}
S.~May, R.~Rannacher, and B.~Vexler.
\newblock Error analysis for a finite element approximation of elliptic
  {D}irichlet boundary control problems.
\newblock {\em SIAM J. Control Optim.}, 51(3):2585--2611, 2013.

\bibitem{OganesjanRuhovec1979}
L.~A. Oganesjan and L.~A. Ruhovec.
\newblock {\em Variatsionno-raznostnye metody resheniya ellipticheskikh
  uravnenii}.
\newblock Akad. Nauk Armyan. SSR, Erevan, 1979.

\bibitem{pfefferer:14}
J.~Pfefferer.
\newblock {\em Numerical analysis for elliptic Neumann boundary control
  problems on polygonal domains}.
\newblock PhD thesis, Universit\"at der Bundeswehr M\"unchen, 2014.
\newblock \url{http://athene.bibl.unibw-muenchen.de:8081/node?id=92055}.

\bibitem{Raugel1978}
G.~Raugel.
\newblock R\'esolution num\'erique par une m\'ethode d'\'el\'ements finis du
  probl\`eme de {D}irichlet pour le laplacien dans un polygone.
\newblock {\em C. R. Acad. Sci. Paris, S\'er. A}, 286(18):A791--A794, 1978.

\bibitem{StrangFix1973}
G.~Strang and G.~J. Fix.
\newblock {\em An analysis of the finite element method}.
\newblock Prentice-Hall, Inc., Englewood Cliffs, N. J., 1973.

\end{thebibliography}
\end{document}